\documentclass[12pt]{elsarticle}
\graphicspath{ {./figures/} }

\usepackage[a4paper, total={6.5in, 9in}]{geometry}
\usepackage{caption}
\usepackage{amsmath}
\usepackage{amssymb}
\usepackage{amsthm}
\usepackage{mathtools}
\usepackage{graphicx}
\usepackage[percent]{overpic}
\usepackage{empheq}
\usepackage{tcolorbox}
\usepackage{mathrsfs}
\usepackage{pifont}
\usepackage{algorithm}
\usepackage{algpseudocode}
\usepackage{natbib}
\usepackage{lscape}
\usepackage{tabularx}
\usepackage{soul}
\usepackage[table]{xcolor}
\usepackage{changepage, threeparttable}
\usepackage{multirow}
\usepackage{adjustbox}
\usepackage{smartdiagram}
\usepackage{longtable}
\usepackage{enumitem}
\usepackage{sectsty}
\usepackage{subcaption}
\usepackage{booktabs}
\usepackage{rotating}
\usepackage{bbding}

\usepackage{tikz}
\usepackage{pgfplots}
\pgfplotsset{compat=1.18}

\usepackage{hyperref}
\usepackage{cleveref}

\hypersetup{
    colorlinks=true,
    urlcolor=blue,
    linkcolor=blue,
    citecolor=red
}

\newtheorem{theorem}{Theorem}[section]
\newtheorem{lemma}[theorem]{Lemma}
\newtheorem{proposition}[theorem]{Proposition}
\newtheorem{corollary}[theorem]{Corollary}
\theoremstyle{definition}
\newtheorem{definition}[theorem]{Definition}
\newtheorem{example}[theorem]{Example}
\newtheorem{remark}[theorem]{Remark}

\newcommand{\AHI}{\mathrm{AHI}}
\newcommand{\PiAHI}{\Pi_{\AHI}}
\newcommand{\SONC}{\mathcal{C}}
\newcommand{\SDSOS}{\mathrm{SDSOS}}

\begin{document}
\begin{frontmatter}

\title{The AHI family of sum of squares polynomials}

\author[label1]{Shaon Naskar}
\ead{snaskar@isihyd.ac.in}

\author[label1]{Kanchan Rajwar}
\ead{kanchanrajwar1519@gmail.com}

\author[label1]{Sujeet Kumar Singh\corref{cor1}}
\ead{sksinghma209@gmail.com, sujeet@isihyd.ac.in}

\cortext[cor1]{Corresponding author}

\address[label1]{Indian Statistical Institute, Hyderabad Unit, 500007, India}


\begin{abstract}
We introduce a new family of non-negative polynomials, constructed via the arithmetic--harmonic inequality, called AHI polynomials. We derive explicit algebraic conditions for this family and prove that, for AHI polynomials, the cone of non-negative polynomials coincides with the cone of sum of squares (SOS) polynomials. We then study their convexity, showing that although AHI polynomials are generally non-convex, certain monomial substructures are SOS-convex. We further locate the family precisely among the standard non-negativity certificates: every AHI polynomial is simultaneously SOS and a sum of non-negative circuit polynomials (SONC), and the containment in the intersection of these two cones is strict. By closing this family under multiplication, we obtain a cone $\Pi_{\mathrm{AHI}}$ that is, by construction, still SOS, yet we prove that it lies outside both the SONC cone and the smaller SDSOS cone. Moreover, membership in this cone admits a closed-form certificate that does not require solving any semidefinite programs. Finally, we demonstrate the usefulness of these structures in optimization: numerical experiments indicate that exploiting AHI sparsity yields a computation time over 300 times faster than dense SOS relaxations and enables solving high-degree polynomial optimization problems (up to degree 40) that standard methods cannot handle due to computational limits, and a factorized hierarchy for $\Pi_{\mathrm{AHI}}$ decomposes products into independent small subproblems that generic sparsity techniques do not detect.
\end{abstract}

\begin{keyword}
Non-negative polynomials \sep Sum of squares \sep AM-HM inequality \sep Polynomial optimization \sep Semidefinite programming \sep SOS-convexity \sep SONC
\end{keyword}

\end{frontmatter}
\makeatletter
\def\ps@pprintTitle{%
 \let\@oddhead\@empty
 \let\@evenhead\@empty
 \def\@oddfoot{}%
 \let\@evenfoot\@oddfoot}
\makeatother

\section{Introduction}\label{sec:intro}

The study of non-negative polynomials and their representation as sum of squares (SOS) is a fundamental topic in real algebraic geometry. A foundational contribution to this field is Hilbert's 1888 paper \cite{hilbert1888darstellung}, which provided a complete characterization of the pairs $(n, 2d)$ of dimension and degree for which every non-negative polynomial is a sum of squares. Hilbert demonstrated that in many cases, there exist non-negative polynomials that cannot be written as sums of squares, although he did not provide an explicit example. This result sparked Hilbert's 17th problem in 1900, asking whether every non-negative polynomial can be expressed as a sum of squares of \textit{rational} functions. \citet{artin1927zerlegung} affirmatively resolved this in 1927. However, the gap between the cone of non-negative polynomials ($\mathcal{P}_{n,2d}$) and the cone of SOS polynomials ($\Sigma_{n,2d}$) remained a subject of intense study.

A pivotal advancement came in 1967 when Motzkin constructed the first explicit example of a non-negative polynomial that is not SOS: the ternary sextic $M(x,y,z) = x^4 y^2 + x^2 y^4 - 3 x^2 y^2 z^2 + z^6$ \citep{motzkin1967arithmetic}. This polynomial, derived by substituting squared monomials into the arithmetic mean--geometric mean (AM-GM) inequality, is non-negative by construction, but resists SOS decomposition. Since then, numerous other examples have been documented (see, e.g., \cite{choi1977extremal, choi1977old, choi1980real, reznick2000some, robinson1973some}). \citet{blekherman2006there} later quantified this gap, showing that for fixed degree, the non-negative cone grows asymptotically much faster than the SOS cone as the dimension increases. Building on Motzkin's insight, \citet{reznick1989forms} introduced ``agiforms,'' a family generalizing Motzkin's construction via the AM-GM inequality, while \citet{iliman2016amoebas} characterized non-negativity for sparse polynomials supported on circuits and introduced the cone of \emph{sums of non-negative circuit polynomials} (SONC), a non-negativity certificate independent of SOS.

The distinction between non-negative polynomials and SOS polynomials is not merely theoretical but has profound computational implications. Determining whether a multivariate polynomial of degree four or higher is non-negative is known to be NP-hard \cite{murty1987some}. In contrast, determining whether a polynomial is a sum of squares can be formulated as a semidefinite programming (SDP) feasibility problem, which can be solved in polynomial time with arbitrary precision \cite{vandenberghe1996semidefinite}. Consequently, SOS relaxations have become the standard tool for approximating non-negative polynomials in optimization and control theory. However, since $\Sigma_{n,2d} \subsetneq \mathcal{P}_{n,2d}$ for most dimensions and degrees, these relaxations are not always exact. It is thus of considerable practical relevance to identify the particular algebraic structures in which these two cones are identical.

The optimization community has shown immense interest in this field following the identification of the fundamental connection between semidefinite programming and SOS polynomials, formally stated as follows.

\begin{theorem}[\citealp{choi1995sums, parrilo2000structured}]\label{thm:sos-gram}
A polynomial $f(x)$, $x \in \mathbb{R}^n$, of degree $2d$ is a sum of squares if and only if there exists a positive symmetric semidefinite matrix $M$ such that
\[
f(x) = v_d(x)^{T} M v_d(x),
\]
where $v_d(x)$ denotes the vector of all monomials in $x$ of degree up to $d$, i.e.,
\[
v_d(x) = \big(1,\, x_{1},\, \ldots,\, x_{n},\, x_{1}x_{2},\, \ldots,\, x_{n}^{d}\big)^{T}.
\]
\end{theorem}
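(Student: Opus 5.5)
The plan is to prove the two directions separately, working throughout with the fixed monomial vector $v_d(x)$ of length $N=\binom{n+d}{d}$. The only tools needed are the spectral decomposition of a symmetric matrix and one degree-counting observation about sums of squares.

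For the direction ``Gram matrix $\Rightarrow$ SOS'', suppose $f(x)=v_d(x)^T M v_d(x)$ with $M\succeq 0$. Since $M$ is positive semidefinite, we can write $M=\sum_{i=1}^{r}\lambda_i u_i u_i^T$ with $\lambda_i\ge 0$; alternatively, factor $M=L^TL$, for instance by a Cholesky-type or square-root factorization. Then
\[
f(x)=\sum_{i=1}^r \lambda_i\,(u_i^T v_d(x))^2=\sum_{i=1}^r\big(\sqrt{\lambda_i}\,u_i^Tv_d(x)\big)^2 .
\]
Each $u_i^Tv_d(x)$ is a polynomial of degree at most $d$, so $f$ is a sum of squares.

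For the converse, suppose $f=\sum_{j=1}^m q_j^2$. The key step is to show that each $q_j$ has degree at most $d$. To see this, let $e=\max_j\deg q_j$ and suppose $e>d$. Let $h_j$ denote the homogeneous component of $q_j$ of degree $e$, which is zero for those $j$ with $\deg q_j<e$. The degree-$2e$ homogeneous part of $f$ is then $\sum_j h_j^2$. This sum is not identically zero, because it is positive at any point where some nonzero $h_j$ does not vanish. That contradicts $\deg f=2d<2e$. Hence every $q_j$ lies in the span of the entries of $v_d(x)$, so we can write $q_j=c_j^Tv_d(x)$ for coefficient vectors $c_j\in\mathbb{R}^N$. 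Setting
\[
M=\sum_{j=1}^m c_jc_j^T
\]
gives a symmetric positive semidefinite matrix, since $z^TMz=\sum_j (c_j^Tz)^2\ge 0$ for every $z$, and it satisfies $v_d^TMv_d=\sum_j (c_j^Tv_d)^2=f$.

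The only nontrivial point is the degree bound in the converse. The argument above handles it through the cancellation-free nature of the top-degree components: a sum of squares of real forms cannot vanish identically unless each form does. Everything else is linear algebra.
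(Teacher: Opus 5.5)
Your proof is correct: spectral factorization of $M$ gives one direction, and the non-cancellation of the top-degree forms $\sum_j h_j^2$ forces $\deg q_j\le d$ in the other. The paper gives no proof of its own here, since the result is imported by citation from Choi--Lam--Reznick and Parrilo, and your argument is the standard proof behind that citation.

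The literature cited there also uses a sharper version (Reznick's half-Newton-polytope lemma). In it, $v_d(x)$ is replaced by the monomials with exponents in $\tfrac12\operatorname{New}(f)$. Your argument gives this verbatim if total degree is replaced by an arbitrary linear weight $\beta\mapsto\langle w,\beta\rangle$. The $w$-leading forms of the $q_j$ cannot cancel in $\sum_j q_j^2$, so
\[
\max_{\beta\in\operatorname{supp}q_j}\langle w,\beta\rangle\le\tfrac12\max_{\alpha\in\operatorname{supp}f}\langle w,\alpha\rangle
\]
for every $w$. That refined form, rather than the dense statement as written, is the one that underlies the sparse Gram bases used later in the paper, such as the half-support \eqref{eq:half-support}.
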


Furthermore, a real multivariate polynomial $f(x)$ is non-negative if $f(x)\ge 0$ for all $x\in\mathbb{R}^n$. If $f:\mathbb{R}^n\to\mathbb{R}$ has degree $d$, then $f(x)=\sum_a f_a x^a$, where $x^a = x_1^{a_1}\cdots x_n^{a_n}$ with $\sum_{i=1}^n a_i\le d$, $a_i\in\mathbb{N}\cup\{0\}$, and $\{f_a\}\in\mathbb{R}^{s(d)}$ is the coefficient vector in the basis
\[
1,x_1,\dots,x_n,x_1^2,x_1x_2,\dots,x_n^2,\dots,x_1^d,\dots,x_n^d.
\]
A polynomial with degree $d$ in $n$ variables has $s(d)$ coefficients, where $s(d)=\binom{n+d}{d}$. A polynomial is homogeneous (or a form) if all its monomials have the same degree. A $d$-degree form $f(x)$ satisfies $f(\lambda x)=\lambda^d f(x)$ and has $\binom{n+d-1}{d}$ coefficients. A polynomial $f(x)$ is a sum of squares if there exist polynomials $f_1(x),\dots,f_r(x)$ such that $f(x)=\sum_{i=1}^r f_i(x)^2$, each such $f_i(x)$ having degree at most $d/2$.

In polynomial optimization, Lasserre's hierarchy \cite{lasserre2001global} employs these SOS relaxations to approximate global minima of non-convex problems \cite{lasserre2009moments,lasserre2015introduction}. Extensive research has been conducted to prove the finite convergence of Lasserre's hierarchy under various conditions \cite{laurent2007semidefinite,nie2014optimality}.

Despite the theoretical convergence of SOS hierarchies, the size of the resulting semidefinite programs increases combinatorially with the number of variables and the degree, often leading to the so-called ``curse of dimensionality.'' For general polynomials, the size of the semidefinite matrix $M$ (also known as the Gram matrix) in Theorem~\ref{thm:sos-gram} scales with $\binom{n+d}{d}$. To mitigate this computational bottleneck, recent research has focused on exploiting specific structures such as symmetry \cite{gatermann2004symmetry}, correlative sparsity \cite{waki2006sums}, and term sparsity \cite{wang2021tssos}. By identifying families of polynomials---such as those arising from fundamental inequalities---that possess inherent structural properties, we can derive specialized SOS decompositions that are significantly less expensive to compute than generic relaxations. The AM-GM inequality has been used extensively to create counterexamples (e.g., the Motzkin form), while the arithmetic mean--harmonic mean (AM-HM) inequality has been comparatively overlooked. Although AM-HM yields a stronger bound than AM-GM, it is rarely used to generate polynomial families.

In this paper, we develop a new class of polynomials with the aid of the classical AM-HM inequality. Surprisingly, for these polynomials, the non-negative cone coincides with the SOS cone. That is, the proposed family exhibits an exact correspondence between non-negativity and the existence of a sum-of-squares representation. We call this family of polynomials the \emph{AHI polynomials}. We then ask where this family sits relative to the SONC certificate, which originates from the AM-GM inequality, and show that although every AHI polynomial is SONC, the family is not closed under multiplication with respect to SONC. Closing it under multiplication produces a new cone $\PiAHI$ that is SOS by construction yet lies outside both SONC and SDSOS, and whose factored structure suggests optimization strategies not captured by existing sparsity techniques.

\paragraph{Organization.}
Section~\ref{sec:family} details the construction of the family $\mathcal{P}$ of AHI polynomials and proves the key theoretical result: a member of $\mathcal{P}$ is non-negative if and only if it is a sum of squares. Section~\ref{sec:convexity} investigates the convexity of these polynomials, proving that they are generally non-convex except for specific monomial structures where they exhibit SOS-convexity. Section~\ref{sec:cones} locates the AHI family inside $\Sigma_{n,2d}\cap\SONC_{n,2d}$, introduces the product cone $\PiAHI$, and establishes its separation from SONC and SDSOS. Section~\ref{sec:numerics} explores the application of AHI polynomials in optimization, demonstrating that the specific sparsity structure allows significantly faster computation times compared to standard SOS relaxations in both constrained and unconstrained settings, and develops a factorized optimization hierarchy for $\PiAHI$. Section~\ref{sec:conclusions} concludes. Two appendices collect a table of explicit SOS decompositions and a technical Hessian computation.

\section{The AHI family of non-negative polynomials}\label{sec:family}

We define a special family of polynomials in $n$ variables as follows:
\begin{equation}\label{eq:family}
\mathcal{P}=\Big\{p: p=
\Big(\sum_{i=1}^n \lambda_i x^{\alpha(i)}\Big) \Big(\sum_{i=1}^n \lambda_i \prod_{{j\neq i}, j=1}^n x^{\alpha(j)}\Big) - c\prod_{i=1}^n x^{\alpha(i)}\Big\},
\end{equation}
where $\lambda_i \geq 0$, $x^{\alpha(i)}$ is a squared monomial in $n$ variables of degree at most $2d$, and $c$ is a real number.

For example, for $n=3$, equation \eqref{eq:family} simplifies to
\[
  (\lambda_1 x^{\alpha(1)} + \lambda_2 x^{\alpha(2)} + \lambda_3 x^{\alpha(3)})(\lambda_1 x^{\alpha(2)+\alpha(3)} + \lambda_2 x^{\alpha(1)+\alpha(3)} + \lambda_3 x^{\alpha(1)+\alpha(2)}) - c\, x^{\alpha(1)+\alpha(2)+\alpha(3)},
\]
which can be rearranged as
\[
\begin{aligned}
 & \lambda_1\lambda_2 x^{2\alpha(2)+\alpha(3)} + \lambda_1\lambda_3 x^{\alpha(2)+2\alpha(3)} + \lambda_1\lambda_2 x^{2\alpha(1)+\alpha(3)} +\lambda_2\lambda_3 x^{\alpha(1)+2\alpha(3)} +
  \lambda_1\lambda_3 x^{2\alpha(1)+\alpha(2)} \\ &+\lambda_2\lambda_3 x^{\alpha(1)+2\alpha(2)}
  + (\lambda_1^2 + \lambda_2^2 + \lambda_3^2 - c) x^{\alpha(1)+\alpha(2)+\alpha(3)}.
  \end{aligned}
\]

\begin{example}\label{ex:ahi1}
Take $\lambda_1=\lambda_2=\lambda_3=1/3$, $c=1$, and choose the squared monomials $x^{\alpha(1)}=x_1^2$, $x^{\alpha(2)}=x_2^2$, $x^{\alpha(3)}=x_3^2$. The resulting AHI polynomial is
\begin{equation}\label{eq:ahi-ex1}
    p_1(x) = \tfrac{1}{9}(x_1^4 x_2^2 + x_2^4 x_1^2 + x_1^4 x_3^2 + x_3^4 x_1^2 + x_2^4 x_3^2 + x_3^4 x_2^2) - \tfrac{2}{3}x_1^2x_2^2x_3^2.
\end{equation}
\end{example}

\begin{example}\label{ex:ahi2}
Taking $\lambda_1=1/2$, $\lambda_2=1/4$, $\lambda_3=1/4$ and the distinct squared monomials $x^{\alpha(1)}=x_1^2 x_2^2$, $x^{\alpha(2)}=x_2^4$, $x^{\alpha(3)}=x_2^2 x_3^4$, we obtain another AHI polynomial
\begin{equation}\label{eq:ahi-ex2}
    p_2(x) = \tfrac{1}{8}(x_1^4 x_2^8 + x_1^2 x_2^{10}) + \tfrac{1}{8}(x_1^4 x_2^6 x_3^4 + x_1^2 x_2^6 x_3^8) + \tfrac{1}{16}(x_2^{10} x_3^4 + x_2^8 x_3^8) - \tfrac{5}{8} x_1^2 x_2^8 x_3^4.
\end{equation}
\end{example}

Further AHI polynomials can be generated in an analogous way. The theoretical framework for building the AHI family is presented in detail in the next subsection.

\subsection{Construction of a subfamily of $\mathcal{P}$}\label{subsec:construction}

The weighted \emph{Arithmetic Mean--Harmonic Mean (AM-HM)} inequality states that for $n$ positive real numbers $a_1,\dots,a_n$ and $n$ non-negative weights $\lambda_1,\dots,\lambda_n$ with $\sum_{i=1}^n \lambda_i = 1$,
\[
  \lambda_1 a_1 + \cdots + \lambda_n a_n
  \;\;\ge\;\;
  \frac{1}{\frac{\lambda_1}{a_1} + \cdots + \frac{\lambda_n}{a_n}}.
\]
With simple algebraic manipulation and rearrangement, this yields
\begin{equation}\label{eq:amhm}
\Big(\sum_{i=1}^n \lambda_i a_i\Big) \Big(\sum_{i=1}^n \lambda_i \prod_{{j\neq i}, j=1}^n a_j\Big) - \prod_{i=1}^n a_i \;\geq\; 0 .
\end{equation}
Expression \eqref{eq:amhm} is valid for all $n \in \mathbb{N}$. It is the particular case of \eqref{eq:family} with $\sum_{i=1}^n \lambda_i = 1$ and $c=1$.

Substituting squared monomials in $n$ non-zero variables $x_1,\dots,x_n$ of degree at most $2d$ in place of the $a_i$ (squared monomials are always non-negative) in \eqref{eq:amhm}, we obtain the AHI polynomial
\begin{equation}\label{eq:ahi-sub}
    p(x_1,\dots,x_n)=\Big(\sum_{i=1}^n \lambda_i x^{\alpha(i)}\Big) \Big(\sum_{i=1}^n \lambda_i \prod_{{j\neq i}, j=1}^n x^{\alpha(j)}\Big) - \prod_{i=1}^n x^{\alpha(i)} .
\end{equation}

\begin{remark}\label{rem:basic}
\hfill
\begin{enumerate}
\item The polynomial formed by substituting squared monomials in \eqref{eq:amhm} is non-negative by construction.
\item Any positive multiple of $p$ is also called an AHI polynomial. Multiples are usually taken to rationalize the denominators of the coefficients.
\end{enumerate}
\end{remark}

\subsection{SOS representation of the expression \texorpdfstring{\eqref{eq:amhm}}{(3)}}\label{subsec:sos-rep}

Here we discuss the SOS representation of the non-negative polynomial \eqref{eq:amhm}. This representation makes the positive semidefiniteness structure explicit, which is what enables the computational savings of Section~\ref{sec:numerics}.

\begin{theorem}\label{thm:sos-rep}
The non-negative polynomial in expression \eqref{eq:amhm} has an SOS representation.
\end{theorem}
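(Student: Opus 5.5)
The plan is to find an explicit identity that writes the left-hand side of \eqref{eq:amhm} as a non-negative combination of terms of the form $(a_i-a_k)^2\prod_{j\neq i,k}a_j$. Once the $a_i$ are replaced by squared monomials, as in \eqref{eq:ahi-sub}, each such term is visibly a square of a polynomial.

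Write $a_i = x^{\alpha(i)} = m_i^2$ for monomials $m_i$, and set $P=\prod_{j=1}^n a_j$, so that $\prod_{j\neq i}a_j = P/a_i$. Expanding the product in \eqref{eq:amhm} gives
\[
\Big(\sum_i \lambda_i a_i\Big)\Big(\sum_k \lambda_k \tfrac{P}{a_k}\Big) = \sum_i \lambda_i^2 P + \sum_{i\neq k}\lambda_i\lambda_k \tfrac{a_i}{a_k}P .
\]
The key step is to use the normalization $\sum_i\lambda_i=1$ to rewrite the subtracted term as
\[
P=\Big(\sum_i\lambda_i\Big)^2P=\sum_i\lambda_i^2P+\sum_{i\neq k}\lambda_i\lambda_kP .
\]
The diagonal terms then cancel, and pairing $(i,k)$ with $(k,i)$ yields
\[
\sum_{i<k}\lambda_i\lambda_k\Big(\tfrac{a_i}{a_k}+\tfrac{a_k}{a_i}-2\Big)P=\sum_{i<k}\lambda_i\lambda_k\,(a_i-a_k)^2\prod_{j\neq i,k}a_j .
\]
This is a polynomial identity: the divisions by $a_i$ are only formal, because $P/(a_ia_k)=\prod_{j\neq i,k}a_j$. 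To justify it rigorously, I would verify it directly by comparing monomial coefficients on both sides. It is really an identity in the indeterminates $a_1,\dots,a_n$ under the constraint $\sum_i\lambda_i=1$.

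Finally, substitute $a_j=m_j^2$ to obtain
\[
p=\sum_{1\le i<k\le n}\Big(\sqrt{\lambda_i\lambda_k}\,\big(m_i^2-m_k^2\big)\prod_{j\neq i,k}m_j\Big)^2 ,
\]
which is a sum of $\binom{n}{2}$ squares of polynomials, using $\lambda_i\ge0$. For a general positive multiple, as in Remark~\ref{rem:basic}, the same representation scales. I expect the main obstacle to be the bookkeeping in the cancellation step, where one must correctly account for the off-diagonal terms produced by $(\sum\lambda_i)^2$. The rest is immediate.
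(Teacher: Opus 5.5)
Your proposal is correct and takes essentially the same route as the paper: both prove the identity \eqref{eq:sos-identity} by expanding the product and grouping the pair terms into $\lambda_i\lambda_j(a_i-a_j)^2\prod_{k\neq i,j}a_k$, then substitute squared monomials. Your step $P=(\sum_i\lambda_i)^2P$ makes explicit the normalization $\sum_i\lambda_i=1$, which the paper's expansion relies on without stating it.
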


\begin{proof}
We claim that
\begin{equation}\label{eq:sos-identity}
\Big(\sum_{i=1}^n \lambda_i a_i\Big) \Big(\sum_{i=1}^n \lambda_i \prod_{{j\neq i}, j=1}^n a_j\Big) - \prod_{i=1}^n a_i \;=\; \sum_{1\leq i<j\leq n} \lambda_i\lambda_j (a_i - a_j)^2\prod_{k\neq i,j}a_k .
\end{equation}
Proving this suffices to establish the existence of an SOS representation for the AHI polynomial. Expanding the left-hand side of \eqref{eq:sos-identity},
\[
\begin{aligned}
&\Big(\sum_{i=1}^n \lambda_i a_i\Big) \Big(\sum_{i=1}^n \lambda_i \prod_{{j\neq i}, j=1}^n a_j\Big) - \prod_{i=1}^n a_i\\
&=\, \lambda_1\lambda_2(a_{1}^2 + a_{2}^2 - 2a_{1}a_{2})\,a_{3}a_{4}\cdots a_{n}
 + \lambda_1\lambda_3(a_{1}^2 + a_{3}^2 - 2a_{1}a_{3})\,a_{2}a_{4}\cdots a_{n} + \cdots \\
&\quad + \lambda_1\lambda_n(a_{1}^2 + a_{n}^2 - 2a_{1}a_{n})\,a_{2}a_{3}\cdots a_{n-1}
 + \lambda_2\lambda_3(a_{2}^2 + a_{3}^2 - 2a_{2}a_{3})\,a_{1}a_{4}\cdots a_{n} + \cdots \\
&\quad + \lambda_2\lambda_n(a_{2}^2 + a_{n}^2 - 2a_{2}a_{n})\,a_{1}a_{3}\cdots a_{n-1} + \cdots
 + \lambda_{n-1}\lambda_n(a_{n-1}^2 + a_{n}^2 - 2a_{n-1}a_{n})\,a_{1}a_{2}\cdots a_{n-2}\\
&=\, \lambda_1\lambda_2 (a_{1}-a_{2})^{2} a_{3}a_{4}\cdots a_{n}
 + \lambda_1\lambda_3(a_{1}-a_{3})^{2} a_{2}a_{4}\cdots a_{n} + \cdots \\
&\quad + \lambda_1\lambda_n(a_{1}-a_{n})^{2} a_{2}a_{3}\cdots a_{n-1}
 + \lambda_2\lambda_3(a_{2}-a_{3})^{2} a_{1}a_{4}\cdots a_{n} + \cdots \\
&\quad + \lambda_2\lambda_n(a_{2}-a_{n})^{2} a_{1}a_{3}\cdots a_{n-1} + \cdots
 + \lambda_{n-1}\lambda_n(a_{n-1}-a_{n})^{2} a_{1}a_{2}\cdots a_{n-2}\\
&=\,\sum_{1\leq i<j\leq n}\lambda_i\lambda_j(a_i - a_j)^2\prod_{k\neq i,j}a_k. \qedhere
 \end{aligned}
\]
\end{proof}

Consequently, each member of the AHI family has an SOS representation: the polynomials \eqref{eq:ahi-sub}, obtained by substituting squared monomials for the $a_i$ in \eqref{eq:sos-identity}, are SOS polynomials on $\mathbb{R}^{n}$. Moreover, every member of \eqref{eq:ahi-sub} has an explicit SOS form with at most $\binom{n}{2}$ square terms, and the monomials appearing inside those squares number at most
\begin{equation}\label{eq:half-support}
|B| \;=\; n(n-1)+1 .
\end{equation}
Indeed, each pair $i<j$ contributes the two monomials $x^{\alpha(i)+\frac12\sum_{k\ne i,j}\alpha(k)}$ and $x^{\alpha(j)+\frac12\sum_{k\ne i,j}\alpha(k)}$, giving $2\binom{n}{2}=n(n-1)$ monomials, together with the single central monomial $\prod_{i}x^{\alpha(i)/2}$. We refer to this set as the \emph{half-support} of the AHI polynomial; it is exactly the Gram basis exploited in Section~\ref{sec:numerics} and reported as $K_{\mathrm{size}}$ in Table~\ref{tab:scalability}. A list of polynomials and their SOS representations is given in \ref{app:sos-table}.

\subsection{SOS representation of the general AHI family \texorpdfstring{\eqref{eq:family}}{(1)}}\label{subsec:general}

Relaxing the requirement $\sum_{i=1}^n \lambda_i = 1$ and taking $c \in \mathbb{R}$ in \eqref{eq:amhm} generates the general AHI family \eqref{eq:family}. Here we prove that all non-negative AHI polynomials in \eqref{eq:family} are SOS. Expanding \eqref{eq:family}, all coefficients $\lambda_i\lambda_j$ are non-negative except the coefficient $(\lambda_1^2 + \cdots +\lambda_n^2 - c)$ of the monomial $x^{\alpha(1) + \cdots + \alpha(n)}$. Non-negativity therefore hinges entirely on this single coefficient. The expanded form of \eqref{eq:family} reads
\begin{equation}\label{eq:expanded}
   \begin{aligned} p(x)=
      &\, \lambda_1\lambda_2 x^{2\alpha(1)+\alpha(3)+\cdots+\alpha(n)} +\lambda_1\lambda_3 x^{2\alpha(1)+\alpha(2)+\alpha(4)+\cdots+\alpha(n)}+\cdots+\lambda_1\lambda_{n} x^{2\alpha(1)+\alpha(2)+\cdots+\alpha(n-1)}\\
      & +\lambda_1\lambda_2 x^{2\alpha(2)+\alpha(3)+\cdots+\alpha(n)} + \lambda_2\lambda_3 x^{\alpha(1)+2\alpha(2)+\alpha(4)+\cdots+\alpha(n)}+\cdots+\lambda_2\lambda_n x^{\alpha(1)+2\alpha(2)+\alpha(3)+\cdots+\alpha(n-1)}\\
     & +\cdots\\
      & + \lambda_1\lambda_n x^{\alpha(2)+\alpha(3)+\cdots+ 2\alpha(n)} + \lambda_2\lambda_n x^{\alpha(1)+\alpha(3)+\cdots +2\alpha(n)}+\cdots + \lambda_{n-1}\lambda_n x^{\alpha(1)+\cdots +\alpha(n-2) + 2\alpha(n)}\\
      & + (\lambda_1^2 + \cdots + \lambda_n^2 - c)\,x^{\alpha(1) + \cdots+\alpha(n)},
   \end{aligned}
\end{equation}
which can be rearranged as
\begin{equation}\label{eq:regrouped}
    p(x)= \sum_{1\leq i < j\leq n}\lambda_i\lambda_j\big(x^{2\alpha(i)} + x^{2\alpha(j)}\big)\prod_{k\neq i,j}x^{\alpha(k)} +  \Big(\sum_{i=1}^{n}\lambda_i^2 - c\Big)\prod_{i=1}^{n}x^{\alpha(i)} .
\end{equation}
We establish the SOS property for the family \eqref{eq:family} via two results: Theorem~\ref{thm:sufficient} gives a sufficient coefficient condition for the SOS property, and Theorem~\ref{thm:necessary} shows the same condition is necessary for non-negativity.

\begin{theorem}\label{thm:sufficient}
Let $p(x) \in \mathcal{P}$ be as defined in \eqref{eq:family}. If the coefficient of the last term in \eqref{eq:regrouped} satisfies
\[ K \;=\; \lambda_1^2 + \cdots +\lambda_n^2 - c \;\ge\; -2\!\!\sum_{1\leq i < j\leq n}\!\! \lambda_i\lambda_j, \]
then $p(x)$ is an SOS polynomial.
\end{theorem}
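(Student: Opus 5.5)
The plan is to reduce to the identity \eqref{eq:sos-identity} of Theorem~\ref{thm:sos-rep}, after first absorbing the constant $c$. Write $a_i = x^{\alpha(i)}$, which are squared monomials, and set $\Lambda = \sum_{i<j}\lambda_i\lambda_j \ge 0$. The hypothesis reads $K + 2\Lambda \ge 0$.

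In the regrouped form \eqref{eq:regrouped}, the monomial $\prod_k a_k$ has coefficient $K$. We split it as
\[
K\prod_k a_k \;=\; -2\Lambda \prod_k a_k + (K+2\Lambda)\prod_k a_k .
\]
Then we distribute $-2\Lambda\prod_k a_k = \sum_{i<j}\lambda_i\lambda_j(-2a_ia_j)\prod_{k\neq i,j}a_k$ across the pairs. This gives
\[
p(x) \;=\; \sum_{1\le i<j\le n}\lambda_i\lambda_j\,(a_i-a_j)^2\prod_{k\neq i,j}a_k \;+\; (K+2\Lambda)\prod_{k=1}^n a_k .
\]
This is exactly the computation in the proof of Theorem~\ref{thm:sos-rep}, with an extra residual term. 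Equivalently, $p$ equals a positive rescaling of the normalized AHI polynomial plus a nonnegative multiple of $\prod a_k$. To see this, note that when $\Lambda>0$, $p$ is $(\sum\lambda_i)^2$ times the AM-HM expression \eqref{eq:amhm} with weights $\lambda_i/\sum\lambda_j$, plus the residual term.

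Next we check that each summand is a square of a polynomial times a nonnegative constant. Each $a_k = x^{\alpha(k)}$ with $\alpha(k)$ even, so $\prod_{k\neq i,j}a_k = \big(x^{\frac12\sum_{k\ne i,j}\alpha(k)}\big)^2$. Hence
\[
\lambda_i\lambda_j (a_i-a_j)^2\prod_{k\ne i,j}a_k = \lambda_i\lambda_j\Big(x^{\alpha(i)+\frac12\sum_{k\ne i,j}\alpha(k)} - x^{\alpha(j)+\frac12\sum_{k\ne i,j}\alpha(k)}\Big)^2 .
\]
Since $\lambda_i\lambda_j\ge0$, this summand is a square of a polynomial after absorbing $\sqrt{\lambda_i\lambda_j}$. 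Similarly, $(K+2\Lambda)\prod_k a_k = \big(\sqrt{K+2\Lambda}\,\prod_k x^{\alpha(k)/2}\big)^2$, and this uses the hypothesis. Summing the two kinds of terms shows that $p$ is SOS, with at most $\binom n2+1$ squares supported on the half-support $B$ of \eqref{eq:half-support}.

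The main obstacle is only bookkeeping: verifying that the cross terms $-2\lambda_i\lambda_j a_ia_j\prod_{k\ne i,j}a_k$ all collapse to the same monomial $\prod_k a_k$ and sum to $-2\Lambda\prod_k a_k$. This holds because $a_ia_j\prod_{k\neq i,j}a_k=\prod_k a_k$ for every pair. Degenerate cases, such as some $\lambda_i=0$ or $n=1$, need a quick remark. For $n=1$ the product over $j\ne i$ is empty, so $p=(\lambda_1^2-c)a_1$ with $\Lambda=0$, and the same argument applies.
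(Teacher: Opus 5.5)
Your proof is correct and follows essentially the same route as the paper. You split $K=-2\sum_{i<j}\lambda_i\lambda_j+\bigl(K+2\sum_{i<j}\lambda_i\lambda_j\bigr)$, distribute the first part pairwise to complete the squares $\lambda_i\lambda_j\bigl(x^{\alpha(i)}-x^{\alpha(j)}\bigr)^2\prod_{k\neq i,j}x^{\alpha(k)}$, and observe that the nonnegative residual multiple of $\prod_i x^{\alpha(i)}$ is a monomial square. This single uniform split is slightly cleaner than the paper's two-case presentation, and it already covers the paper's first case, where $K$ equals a partial sum of cross terms.
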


\begin{proof}
We analyse the coefficient $K$ in \eqref{eq:regrouped}. First, assume $K$ can be written exactly as a sum of negative cross terms,
\[
K = -2\lambda_i\lambda_j \quad \text{or} \quad -2\lambda_i\lambda_j - 2\lambda_k\lambda_l - \cdots .
\]
Then the term $K\prod_{i=1}^{n}x^{\alpha(i)}$ can be distributed to pair with the corresponding positive terms $\lambda_i\lambda_j(x^{2\alpha(i)} + x^{2\alpha(j)})\prod_{k\neq i,j}x^{\alpha(k)}$, and each grouping forms a perfect square:
\[
\lambda_i\lambda_j\big(x^{2\alpha(i)} + x^{2\alpha(j)}\big) - 2\lambda_i\lambda_j x^{\alpha(i)}x^{\alpha(j)} = \lambda_i\lambda_j \big(x^{\alpha(i)} - x^{\alpha(j)}\big)^2 .
\]
Thus the polynomial decomposes into an SOS representation.

Second, consider the general case where $K$ exceeds the sum of all negative cross terms. Put $S_{\mathrm{cross}} = -2\sum_{1\leq i < j\leq n} \lambda_i\lambda_j$, so that $K = S_{\mathrm{cross}} + k$ with $k \ge 0$. From the expansion \eqref{eq:regrouped}, all terms with positive coefficients, when paired with the negative cross terms of $S_{\mathrm{cross}}$, decompose into squares of the form $(x^{\alpha(i)} - x^{\alpha(j)})^2$ multiplied by non-negative monomial factors. If $k > 0$, there remains a term $k \prod_{i=1}^n x^{\alpha(i)}$; since the $x^{\alpha(i)}$ are squared monomials, their product is a square and $k$ is positive, so this residual term is also a square. Therefore $p(x)$ is a sum of squares.
\end{proof}

\begin{theorem}\label{thm:necessary}
Let $p(x) \in \mathcal{P}$. If the coefficient term satisfies
\[ K \;=\; \lambda_1^2 + \cdots +\lambda_n^2 - c \;<\; -2\!\!\sum_{1\leq i < j\leq n} \!\!\lambda_i\lambda_j, \]
then $p(x)$ is not non-negative, and consequently not SOS.
\end{theorem}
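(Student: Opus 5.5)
The natural witness is a point where all the squared monomials $x^{\alpha(i)}$ take the same value. Take $x=(1,1,\dots,1)$. Then every monomial $x^{\alpha(i)}$ equals $1$, and the regrouped form \eqref{eq:regrouped} evaluates to
\[
p(1,\dots,1)=\sum_{1\le i<j\le n}\lambda_i\lambda_j\,(1+1)+K = 2\sum_{1\le i<j\le n}\lambda_i\lambda_j + K .
\]
Equivalently, from the definition \eqref{eq:family} one has $p(\mathbf 1)=\big(\sum_i\lambda_i\big)^2-c$. The hypothesis $K<-2\sum_{i<j}\lambda_i\lambda_j$ then says exactly that $p(\mathbf 1)<0$. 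Hence $p$ is not non-negative on $\mathbb{R}^n$.

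This evaluation is the entire mechanism, and it also explains why the threshold is sharp. On the positive orthant, substituting $a_i=x^{\alpha(i)}>0$ in the AM-HM identity \eqref{eq:sos-identity} shows that the non-constant part is minimized (relative to $\prod a_i$) when all $a_i$ are equal. In that case each square $(a_i-a_j)^2$ vanishes. The point $\mathbf 1$ realizes equality of all $a_i$ regardless of which exponents $\alpha(i)$ were chosen, so no analysis of the exponent geometry is needed.

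The final clause follows immediately: every SOS polynomial is non-negative, so a polynomial taking a negative value cannot be SOS. Together with Theorem~\ref{thm:sufficient}, this yields the equivalence: non-negative $\iff$ SOS $\iff$ $K\ge -2\sum_{i<j}\lambda_i\lambda_j$.

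The only step needing care is confirming that evaluating at $\mathbf 1$ is legitimate in degenerate cases, for instance when some $\lambda_i=0$ or when monomials coincide. The expansion \eqref{eq:regrouped} is a polynomial identity, so its value at $\mathbf 1$ is unaffected by cancellations or coinciding exponents. The computation is therefore valid in all cases, and I expect no real obstacle.
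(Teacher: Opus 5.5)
Your proposal is correct and is essentially the paper's proof: the paper also evaluates \eqref{eq:regrouped} at $x^*=(1,\dots,1)$, obtains $p(x^*)=2\sum_{i<j}\lambda_i\lambda_j+K<0$, and concludes that $p$ is not SOS because SOS polynomials are non-negative. Your extra observation that $p(1,\dots,1)=\big(\sum_i\lambda_i\big)^2-c$ follows directly from \eqref{eq:family} and is an equivalent shortcut; the sharpness remark is a correct aside that the statement does not need.
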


\begin{proof}
We construct a point $x^* \in \mathbb{R}^n$ with $p(x^*) < 0$. Let $x^* = (1,\dots,1)^T$. Evaluating \eqref{eq:regrouped} at this point yields
\[
p(1,\dots,1) = \sum_{1\leq i < j\leq n} 2\lambda_i\lambda_j + K .
\]
Substituting $K = -2\sum_{i<j}\lambda_i\lambda_j - \epsilon$ for some $\epsilon > 0$ gives
\[
p(1,\dots,1) = \sum_{1\leq i < j\leq n} 2\lambda_i\lambda_j + \Big( -2\!\!\sum_{1\leq i < j\leq n}\!\! \lambda_i\lambda_j - \epsilon \Big) = -\epsilon < 0 .
\]
Hence $p$ is not positive semidefinite, and by definition it cannot be a sum of squares.
\end{proof}

Combining Theorems~\ref{thm:sufficient} and~\ref{thm:necessary}, we conclude that for the AHI family $\mathcal{P}$, a polynomial $p(x)$ is non-negative if and only if it is SOS. This is a strong assertion: there is no gap between non-negativity and the existence of an SOS representation for polynomials of this family.

\section{Convexity of AHI polynomials}\label{sec:convexity}

In this section we investigate whether the polynomials of the family \eqref{eq:family} are convex. Recall that an $m\times m$ symmetric polynomial matrix $F$ is positive semidefinite, denoted $F(x)\succeq 0$, if $F(x)$ is positive semidefinite for all $x\in \mathbb{R}^n$, equivalently if the scalar polynomial $y^T F(x) y$ is non-negative in $m+n$ variables. A polynomial matrix $F(x)$ is an SOS-matrix if $F(x)=L(x)^T L(x)$ for some polynomial matrix $L(x)$, equivalently if $y^TF(x)y$ is SOS.

A polynomial $p(x)$ is convex if and only if its Hessian $H(x)$, the $n\times n$ symmetric matrix of second-order partial derivatives, is positive semidefinite. Deciding convexity of polynomials is NP-hard already in degree four \citep{ahmadi2013np}. \citet{helton2010semidefinite} proposed SOS-convexity as a tractable algebraic certificate for convexity. To investigate the convexity of AHI polynomials, we first treat the two-variable case of \eqref{eq:regrouped}.

\begin{theorem}\label{thm:nonconvex-2var}
The polynomials generated from \eqref{eq:regrouped} for $1\leq i < j\leq 2$ with general monomials in two variables are non-convex.
\end{theorem}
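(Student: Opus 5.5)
With two factors, \eqref{eq:regrouped} becomes
\[
p(x)=\lambda_1\lambda_2\big(x^{2\alpha(1)}+x^{2\alpha(2)}\big)+K\,x^{\alpha(1)+\alpha(2)},\qquad K=\lambda_1^2+\lambda_2^2-c .
\]
We write $x^{\alpha(1)}=x_1^{2a_1}x_2^{2b_1}$ and $x^{\alpha(2)}=x_1^{2a_2}x_2^{2b_2}$, with $\alpha(1)\ne\alpha(2)$. We work in the non-negative case of Theorem~\ref{thm:sufficient}, $K\ge -2\lambda_1\lambda_2$, and assume $\lambda_1\lambda_2>0$ so that the polynomial is not a single monomial. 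To show non-convexity it suffices to exhibit a point $x^*$ and a direction $v$ with $v^{T}H(x^*)v<0$. Equivalently, we may find $x^*,v$ such that $t\mapsto p(x^*+tv)$ has negative second derivative at $t=0$.

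\textbf{Model case.} Take $\lambda_1=\lambda_2=1$ and $K=-2$, so $p=(x^{\alpha(1)}-x^{\alpha(2)})^2$. Its zero set $\{x^{\alpha(1)}=x^{\alpha(2)}\}$ contains the points $(1,1)$, $(1,-1)$, $(-1,1)$, $(-1,-1)$. If $p$ were convex, its zero set would be convex, since it is the minimiser set of a convex function. In most cases this is already contradicted, for instance when the curve $x_1^{2(a_1-a_2)}=x_2^{2(b_2-b_1)}$ in the positive quadrant is not a line, or when $p$ does not vanish at the midpoint $(1,0)$ of $(1,1)$ and $(1,-1)$.

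\textbf{General case.} Here I would instead work with the explicit Hessian. Set $u=x^{\alpha(1)}$ and $w=x^{\alpha(2)}$ and write $p=\lambda_1\lambda_2(u^2+w^2)+Kuw$. A direct choice is to evaluate at $x^*=(1,1)$, where $u=w=1$, and compute
\[
\partial_k\partial_l p=\sum\big(\text{first-derivative products}\big)+\big(\text{second derivatives of the monomials}\big).
\]

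The cleanest route is to find a direction along which $p$ fails to be convex in one variable.

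\textbf{Axis direction.} Restrict $p$ to a line $x_2=s$ fixed and vary $x_1$. The restriction is $A x_1^{4a_1}+B x_1^{4a_2}+C x_1^{2a_1+2a_2}$ with $A,B\ge 0$.
If $a_1\ne a_2$, say $a_1>a_2$, and $C<0$, then $f(t)=At^{4a_1}+Bt^{4a_2}+Ct^{2(a_1+a_2)}$ is non-negative and vanishes, or nearly vanishes, at $t=0$ and at some $t\ne0$ when $K=-2\lambda_1\lambda_2$. A non-negative convex function of one variable with two distinct zeros must vanish on the segment between them. This function does not, because at $t$ halfway between the zeros it equals $\lambda_1\lambda_2(u-w)^2>0$.

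\textbf{Strategy.} I would present the argument as follows. First treat the boundary case $K=-2\lambda_1\lambda_2$ via this zero-set argument. Then, for $K$ larger, note that the exponents are even, so $p$ is even in each variable. Convexity plus evenness would force the minimum at the origin along lines, with monotonicity on rays. I would then exhibit a ray on which $p$ is non-monotone, by computing $\frac{d}{dt}p(tx^*)$ using homogeneity of each monomial.

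\textbf{Main obstacle.} The hard step is the case $K>0$ with all coefficients positive, for example $p=x_1^4+x_2^4+x_1^2x_2^2$, which may actually be convex. I expect the statement must be read for the extremal/AM--HM instance $c=(\lambda_1+\lambda_2)^2$, or for general (non-pure-power) monomials. I would first try the explicit $2\times2$ Hessian determinant at $(1,1)$ and at points $(t,1)$ with $t\to0$. There the mixed monomial $x_1^{2a}x_2^{2b}$ with $a,b\ge1$ contributes a negative determinant term, and I would show that term dominates when a monomial genuinely involves both variables.
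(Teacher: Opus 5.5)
You read the hypothesis the same way the paper does (your second option). The paper's proof assumes both squared monomials involve both variables ($\alpha_i,\beta_i\ge 2$), and the pure-power case is Theorem~\ref{thm:sos-convex}. Your worry is justified: $x_1^4+x_2^4+x_1^2x_2^2$ is the AHI polynomial with $\lambda_1=\lambda_2=1$, $c=1$, and it is convex, with Hessian determinant $24x_1^4+132x_1^2x_2^2+24x_2^4$.

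Under that reading, however, the proposal proves nothing beyond the boundary value $K=-2\lambda_1\lambda_2$. The two tools you offer for $K>-2\lambda_1\lambda_2$ fail on whole ranges of parameters. Take $x^{\alpha(1)}=x_1^2x_2^4$, $x^{\alpha(2)}=x_1^4x_2^2$, $\lambda_1=\lambda_2=1$, so $p=x_1^4x_2^8+x_1^8x_2^4+Kx_1^6x_2^6$.
\begin{itemize}
\item This $p$ is a form, so $p(tx)=t^{12}p(x)$ is monotone on every ray. More generally, for $K\ge 0$ every coefficient of $p$ is non-negative and every exponent even, so no ray is ever non-monotone.
\item $H(1,1)$ has eigenvalues $132+66K$ and $4-6K$, so it is positive semidefinite for all $-2\le K\le\tfrac23$.
\end{itemize}
The one idea that would work, the $t\to 0$ analysis at $(t,1)$, is only announced, not carried out.

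The missing observation is that mixed monomials make $p$ vanish on both coordinate axes. Convexity then finishes in one line:
\[
p(a,b)\;\le\;\tfrac12\,p(2a,0)+\tfrac12\,p(0,2b)\;=\;0\qquad\text{for all }(a,b)\in\mathbb{R}^2 .
\]
So a convex $p$ would be bounded above, hence constant, hence identically $p(0,0)=0$. That is impossible when $\alpha(1)\neq\alpha(2)$ and $\lambda_1\lambda_2>0$. This argument covers every $K$ and needs no Hessian.

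If you prefer your Hessian route, write $x^{\alpha(i)}=x_1^{2a_i}x_2^{2b_i}$, so $a_i,b_i\ge 1$, and assume $a_1<a_2$ (use $(1,t)$ if $a_1=a_2$). The lowest power of $t$ in $\det H(t,1)$ is $t^{8a_1-2}$, with coefficient $16\lambda_1^2\lambda_2^2\,a_1b_1(1-4a_1-4b_1)<0$ for every $K$. Hence $H(t,1)$ is indefinite for small $t>0$.

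That coefficient is the first one collected in the paper's appendix. The paper, however, asserts that every collected coefficient of $H_{11}H_{22}-H_{12}^2$ is negative. The example above rules this out: at $(1,1)$ those coefficients sum to $\det H(1,1)=(132+66K)(4-6K)$, which is positive for $0<K<\tfrac23$. Isolating the extreme term by letting $t\to 0$ is exactly what makes a Hessian argument valid here.

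Your boundary case also closes at once: a convex zero set containing $(\pm1,\pm1)$ contains $[-1,1]^2$, which forces $p\equiv 0$.
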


\begin{proof}
Such polynomials are given by
\begin{equation}\label{eq:2var}
p(x) = \lambda_1\lambda_2\big(x_1^{\alpha_1}x_2^{\beta_1} + x_1^{\alpha_2}x_2^{\beta_2}\big) + (\lambda_1^2 + \lambda_2^2 - c)\,x_1^{\alpha_1 + \alpha_2}x_2^{\beta_1 + \beta_2},
\end{equation}
where $\alpha_i,\beta_i\geq 2$ are even. The Hessian entries are
\begin{equation}\label{eq:hessian-2var}
    \begin{aligned}
    H_{11} =\, & \lambda_1\lambda_2 (2\alpha_1)(2\alpha_1 -1) x_1^{2\alpha_1 - 2}x_2^{2\beta_1} +  \lambda_1\lambda_2 (2\alpha_2)(2\alpha_2 -1) x_1^{2\alpha_2 - 2}x_2^{2\beta_2}  \\
    & + (\lambda_1^2 + \lambda_2^2 - c)(\alpha_1 + \alpha_2)(\alpha_1 + \alpha_2 - 1)x_1^{\alpha_1 + \alpha_2 - 2}x_2^{\beta_1 + \beta_2},\\[2pt]
      H_{22} =\, & \lambda_1\lambda_2 (2\beta_1)(2\beta_1 -1) x_1^{2\alpha_1}x_2^{2\beta_1 - 2} +  \lambda_1\lambda_2 (2\beta_2)(2\beta_2 -1) x_1^{2\alpha_2}x_2^{2\beta_2 - 2}  \\
    & + (\lambda_1^2 + \lambda_2^2 - c)(\beta_1 + \beta_2)(\beta_1 + \beta_2 - 1)x_1^{\alpha_1 + \alpha_2}x_2^{\beta_1 + \beta_2 - 2},\\[2pt]
       H_{12}= H_{21} =\, & \lambda_1\lambda_2 (2\alpha_1)(2\beta_1)x_1^{2\alpha_1 - 1}x_2^{2\beta_1 - 1} + \lambda_1\lambda_2 (2\alpha_2)(2\beta_2)x_1^{2\alpha_2 - 1}x_2^{2\beta_2 - 1}  \\
       & + (\lambda_1^2 + \lambda_2^2 - c)(\alpha_1 + \alpha_2)(\beta_1 + \beta_2)x_1^{\alpha_1 + \alpha_2 - 1}x_2^{\beta_1 + \beta_2 - 1}.
    \end{aligned}
\end{equation}
If $\lambda_1^2 + \lambda_2^2 > c$, the minors $H_{11}$ and $H_{22}$ are SOS. However, evaluating the determinant $\det(H) = H_{11} H_{22} - H_{12}^2$ shows $H_{11} H_{22} - H_{12}^2 < 0$; the detailed calculation is given in Appendix~\ref{app:hessian}. Hence $H$ is not positive semidefinite and the polynomials are non-convex.
\end{proof}

Next we consider the relaxed monomials $x_1^{\alpha_1}x_2^0$ and $x_1^0 x_2^{\alpha_2}$, where $\alpha_1, \alpha_2 \geq 2$ are even.

\begin{theorem}\label{thm:sos-convex}
Bivariate AHI polynomials of even degree of the form
\begin{equation}\label{eq:2var-diag}
     \lambda_1\lambda_2\big(x_1^{2\alpha_1} + x_2^{2\alpha_2}\big) + (\lambda_1^2 + \lambda_2^2 - c)\,x_1^{\alpha_1}x_2^{\alpha_2}
\end{equation}
are SOS-convex whenever they are convex.
\end{theorem}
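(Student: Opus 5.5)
The plan is to make the convexity hypothesis explicit through the Hessian and then exhibit an SOS certificate for the biquadratic form $u^T H(x)u$, where $u=(u_1,u_2)$. Write $a=\lambda_1\lambda_2\ge 0$ and $K=\lambda_1^2+\lambda_2^2-c$, so that $p=a(x_1^{2\alpha_1}+x_2^{2\alpha_2})+Kx_1^{\alpha_1}x_2^{\alpha_2}$. The Hessian entries are
\[
H_{11}=2a\alpha_1(2\alpha_1-1)x_1^{2\alpha_1-2}+K\alpha_1(\alpha_1-1)x_1^{\alpha_1-2}x_2^{\alpha_2},\qquad
H_{22}=2a\alpha_2(2\alpha_2-1)x_2^{2\alpha_2-2}+K\alpha_2(\alpha_2-1)x_1^{\alpha_1}x_2^{\alpha_2-2},
\]
and $H_{12}=K\alpha_1\alpha_2x_1^{\alpha_1-1}x_2^{\alpha_2-1}$.

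\textbf{Reduction to a Gram matrix.} Since $\alpha_1,\alpha_2$ are even, every monomial appearing in $u^THu$ is a square or a product of two of the four monomials
\[
m_1=u_1x_1^{\alpha_1-1},\quad m_2=u_1x_1^{(\alpha_1-2)/2}x_2^{\alpha_2/2},\quad m_3=u_2x_2^{\alpha_2-1},\quad m_4=u_2x_1^{\alpha_1/2}x_2^{(\alpha_2-2)/2}.
\]
I will therefore write $u^THu=m^TQm$ with $m=(m_1,\dots,m_4)^T$ and a symmetric $4\times4$ matrix $Q$. Its diagonal is $\big(2a\alpha_1(2\alpha_1-1),\,K\alpha_1(\alpha_1-1),\,2a\alpha_2(2\alpha_2-1),\,K\alpha_2(\alpha_2-1)\big)$. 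The mixed term $2K\alpha_1\alpha_2u_1u_2x_1^{\alpha_1-1}x_2^{\alpha_2-1}$ can be split between the entry $Q_{24}$ (since $m_2m_4$ has exactly this monomial) and, when the exponents coincide, entries such as $Q_{14}$ and $Q_{23}$. The free parameters are this split and the usual Gram ambiguities between coinciding products. The claim then becomes: convexity of $p$ implies that some admissible $Q$ is positive semidefinite.

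\textbf{Case analysis.} \emph{Case $K=0$.} Then $H$ is diagonal with entries that are non-negative multiples of even powers, so $Q$ is diagonal and PSD, and $p$ is SOS-convex. \emph{Case $\alpha_1=\alpha_2$.} Then $p$ is a bivariate form, and convexity coincides with SOS-convexity for bivariate forms by Ahmadi--Parrilo. For a self-contained argument, I would use symmetry to reduce $Q$ to a $2\times2$ block problem and check directly that PSD-ness of that block is equivalent to the interval of $K$ for which $H\succeq0$ at $(1,1)$ and along the axes. \emph{Case $\alpha_1\ne\alpha_2$, $K\neq0$.} The key step here is to prove that convexity forces $K\ge0$ and a bounded range of $K$. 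I would obtain this by testing $H$ along the curves $x_2=1$, $x_1\to0$ and $x_1=1$, $x_2\to0$. When $\alpha_1\ge4$, the term $K x_1^{\alpha_1-2}$ dominates $H_{11}$ near $x_1=0$, so $K<0$ makes $H_{11}<0$; when $\alpha_1=2$, a direct evaluation is needed. With $K\ge0$, I would then weight the cross term as $K\alpha_1\alpha_2\,m_2m_4$ only. Positivity of $Q$ then reduces to the $2\times2$ condition
\[
K^2\alpha_1\alpha_2(\alpha_1-1)(\alpha_2-1)\ge K^2\alpha_1^2\alpha_2^2,
\]
which fails. So the cross term must instead be shared with $m_1m_4$ and $m_2m_3$, which are the same monomial only when the exponents line up.

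\textbf{Expected obstacle.} This last step is the crux. In general the pure monomial $x_1^{\alpha_1}x_2^{\alpha_2}$ is not convex, so its indefinite contribution must be absorbed by the diagonal terms through the free Gram entries. The matching of exponents that makes this possible seems to occur only for $\alpha_1=\alpha_2$ or special small exponents such as $\alpha_i=2$. My expectation is therefore that in the remaining subcase convexity actually fails unless $K=0$, which places it back in the trivial case. I would verify this by a scaling argument: substitute $x_1=t^{\alpha_2}s$, $x_2=t^{\alpha_1}$ and show that $\det H$ takes negative values on a suitable curve whenever $K\ne0$ and $\alpha_1\ne\alpha_2$.
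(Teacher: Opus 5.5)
Your Gram-matrix setup and the easy cases are fine: $K<0$ is excluded, $K=0$ is trivial, and $\alpha_1=\alpha_2$ follows from Ahmadi--Parrilo. The main case, however, breaks down at exactly the step you flag, for two related reasons.

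First, you overlooked that $m_1m_3=u_1u_2\,x_1^{\alpha_1-1}x_2^{\alpha_2-1}$ is the monomial of $H_{12}$ for \emph{every} choice of exponents. By contrast, $m_1m_4$ and $m_2m_3$ carry $x$-exponents $(\tfrac32\alpha_1-1,\tfrac12\alpha_2-1)$ and $(\tfrac12\alpha_1-1,\tfrac32\alpha_2-1)$, and never match it.

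Second, your fallback is false: it is not true that convexity forces $K=0$ when $\alpha_1\neq\alpha_2$, so the proposed scaling argument cannot succeed. Put $A=2a\alpha_1(2\alpha_1-1)$, $B=\alpha_1(\alpha_1-1)$, $C=2a\alpha_2(2\alpha_2-1)$, $D=\alpha_2(\alpha_2-1)$, $E=\alpha_1\alpha_2$. Then
\[
\det H=x_1^{\alpha_1-2}x_2^{\alpha_2-2}\Big[ADK\,s^2+BCK\,t^2+\big(AC+(BD-E^2)K^2\big)st\Big],\qquad s=x_1^{\alpha_1},\ t=x_2^{\alpha_2},
\]
which is non-negative for all sufficiently small $K>0$, whatever the even exponents. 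For example, $x_1^4+x_2^8+x_1^2x_2^4$ is convex: $H_{11},H_{22}\ge0$ and $\det H=x_2^2(144x_1^4+632x_1^2x_2^4+112x_2^8)$. So the case you hoped was vacuous is the substantive one, and your plan leaves it unproved.

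The missing idea is to split the cross term as $2q\,m_1m_3+2r\,m_2m_4$ with $q+r=KE$, and to set all other off-diagonal entries of $Q$ to zero. Then $Q$ is block diagonal, with $\begin{pmatrix}A&q\\ q&C\end{pmatrix}$ on $(m_1,m_3)$ and $\begin{pmatrix}KB&r\\ r&KD\end{pmatrix}$ on $(m_2,m_4)$. A PSD choice therefore exists if and only if $KE\le\sqrt{AC}+K\sqrt{BD}$.

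On the convexity side, for $K\ge0$ (which you correctly showed convexity forces), $H_{11}$ and $H_{22}$ are automatically non-negative. The bracket above is non-negative on $s,t\ge0$ if and only if $AC+(BD-E^2)K^2\ge-2K\sqrt{ABCD}$, i.e.\ $(\sqrt{AC}+K\sqrt{BD})^2\ge K^2E^2$. This is the same inequality as the Gram condition.

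Hence convexity and SOS-convexity coincide, uniformly in $\alpha_1,\alpha_2$, with no case split and no appeal to Ahmadi--Parrilo. This is the precise content of the paper's argument, which it only sketches, that a large cross term ``can be grouped with portions of the diagonal terms to form a perfect square''.
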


\begin{proof}
The Hessian entries of \eqref{eq:2var-diag} are
\begin{equation}\label{eq:hessian-diag}
\begin{aligned}
H_{11} &= \lambda_1\lambda_2(2\alpha_1)(2\alpha_1 - 1)x_1^{2\alpha_1 - 2} + (\lambda_1^2 + \lambda_2^2 - c)\alpha_1(\alpha_1 - 1)x_1^{\alpha_1 - 2}x_2^{\alpha_2},\\
H_{22} &= \lambda_1\lambda_2(2\alpha_2)(2\alpha_2 - 1)x_2^{2\alpha_2 - 2} + (\lambda_1^2 + \lambda_2^2 - c)\alpha_2(\alpha_2 - 1)x_1^{\alpha_1}x_2^{\alpha_2 - 2},\\
H_{12}&=H_{21} = (\lambda_1^2 + \lambda_2^2 - c)\alpha_1\alpha_2 x_1^{\alpha_1 - 1}x_2^{\alpha_2 - 1}.
\end{aligned}
\end{equation}
We analyse the quadratic form $z^{T}H(x)z$:
\begin{equation}\label{eq:quadform}
    \begin{aligned}
        z^{T}H(x)z =\, & \lambda_1\lambda_2(2\alpha_1)(2\alpha_1 - 1)x_1^{2\alpha_1 - 2}z_1^{2} + (\lambda_1^2 + \lambda_2^2 - c)\alpha_1(\alpha_1 - 1)x_1^{\alpha_1 - 2}x_2^{\alpha_2}z_1^{2}\\
        &  + \lambda_1\lambda_2(2\alpha_2)(2\alpha_2 - 1)x_2^{2\alpha_2 - 2}z_2^{2} + (\lambda_1^2 + \lambda_2^2 - c)\alpha_2(\alpha_2 - 1)x_1^{\alpha_1}x_2^{\alpha_2 - 2}z_2^{2}\\
        & + 2(\lambda_1^2 + \lambda_2^2 - c)\alpha_1\alpha_2 x_1^{\alpha_1 - 1}x_2^{\alpha_2 - 1}z_1 z_2 .
    \end{aligned}
\end{equation}
For $H(x)$ to be PSD the principal minors must be non-negative, which requires $\lambda_1^2 + \lambda_2^2 > c$. Assume without loss of generality that the terms associated with $\alpha_2$ dominate those associated with $\alpha_1$. Convexity depends on the magnitude of the cross term $2(\lambda_1^2 + \lambda_2^2 - c)\alpha_1\alpha_2$: if this term is sufficiently small, $z^T H(x) z$ decomposes directly into squares; if it is large, it can be grouped with portions of the diagonal terms to form a perfect square. Since any convex configuration of parameters leads to a decomposition into squares, convexity implies SOS-convexity in this case.
\end{proof}

The family \eqref{eq:regrouped} is non-convex already for three variables.

\begin{theorem}\label{thm:nonconvex-3var}
The three-variable polynomial
\begin{equation}\label{eq:3var}
    \lambda_1\lambda_2(x_1^{\alpha_1} + x_2^{\alpha_2})x_3^{\alpha_3} + \lambda_1\lambda_3(x_1^{\alpha_1} + x_3^{\alpha_3})x_2^{\alpha_2} + \lambda_2\lambda_3(x_2^{\alpha_2} + x_3^{\alpha_3})x_1^{\alpha_1} +
    (\lambda_1^2 + \lambda_2^2 + \lambda_3^2 - c)x_1^{\alpha_1}x_2^{\alpha_2}x_3^{\alpha_3}
\end{equation}
is non-convex.
\end{theorem}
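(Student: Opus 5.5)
The plan is to reduce to a two-variable monomial by restricting \eqref{eq:3var} to a coordinate plane, using only that convexity is preserved under restriction to an affine subspace. Throughout I assume, as in Theorems~\ref{thm:nonconvex-2var} and~\ref{thm:sos-convex}, that $\alpha_1,\alpha_2,\alpha_3\ge 2$ are even and $\lambda_i\ge 0$. I also assume $p$ is not the zero polynomial, since the trivial case $p\equiv 0$ must be excluded.

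\emph{Step 1: restrict to $x_3=0$.} Setting $x_3=0$ in \eqref{eq:3var} kills every term carrying $x_3^{\alpha_3}$. Since $\alpha_3\ge 2$, this includes the term with coefficient $\lambda_1^2+\lambda_2^2+\lambda_3^2-c$. What survives is
\[
q(x_1,x_2)=\lambda_3(\lambda_1+\lambda_2)\,x_1^{\alpha_1}x_2^{\alpha_2}.
\]
Restricting to $x_1=0$ gives $\lambda_1(\lambda_2+\lambda_3)\,x_2^{\alpha_2}x_3^{\alpha_3}$, and restricting to $x_2=0$ gives $\lambda_2(\lambda_1+\lambda_3)\,x_1^{\alpha_1}x_3^{\alpha_3}$.

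\emph{Step 2: a monomial $x^a y^b$ with $a,b\ge 2$ is not convex.} Its Hessian determinant is
\[
ab(a-1)(b-1)x^{2a-2}y^{2b-2}-a^2b^2x^{2a-2}y^{2b-2}=ab\,(1-a-b)\,x^{2a-2}y^{2b-2}.
\]
This is strictly negative whenever $xy\neq 0$. Hence any positive multiple of such a monomial is non-convex. So if any of the three coefficients $\lambda_3(\lambda_1+\lambda_2)$, $\lambda_1(\lambda_2+\lambda_3)$, $\lambda_2(\lambda_1+\lambda_3)$ is positive, the corresponding restriction of $p$ is non-convex, and therefore $p$ is non-convex.

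\emph{Step 3: the degenerate case.} All three coefficients vanish exactly when at most one $\lambda_i$ is nonzero. In that case every $\lambda_i\lambda_j$ term disappears, and $p=K\,x_1^{\alpha_1}x_2^{\alpha_2}x_3^{\alpha_3}$ with $K=\sum_i\lambda_i^2-c$.
\begin{itemize}
\item If $K>0$, restrict to the affine plane $x_3=1$. This gives $K x_1^{\alpha_1}x_2^{\alpha_2}$, which is non-convex by Step 2.
\item If $K<0$, then $p\le 0$ everywhere, $p(0)=0$, and $p$ is not identically zero. A convex function on $\mathbb{R}^3$ that is bounded above must be constant, so $p$ is not convex.
\end{itemize}
The only remaining case is $K=0$, which is the excluded zero polynomial.

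\emph{Main obstacle.} The argument is elementary, so the real work is bookkeeping. One must check which terms survive each restriction, in particular that the $K$-term always carries all three variables and so always vanishes. One must also state explicitly the nondegeneracy hypotheses (exponents at least $2$, $p\not\equiv 0$) under which the statement is true.
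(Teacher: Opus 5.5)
Your proof is correct, and it takes a different, more elementary route than the paper. The paper only sketches an argument. It asserts that the leading principal minors of the full $3\times 3$ Hessian of \eqref{eq:3var} turn negative ``exactly as'' in the global, monomial-by-monomial sign analysis of Appendix~\ref{app:hessian}, and it does not discuss degenerate parameters or exponent hypotheses.

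Your restriction to $x_3=0$ is that strategy localized to a plane where it becomes trivial. The Hessian of $q$ is the $\{1,2\}$ principal submatrix of $\nabla^2 p$ on $x_3=0$. There every term containing $x_3$ vanishes, including the $K$-term, which is what makes a global expansion messy. The minor therefore collapses to $\lambda_3^2(\lambda_1+\lambda_2)^2\,\alpha_1\alpha_2(1-\alpha_1-\alpha_2)\,x_1^{2\alpha_1-2}x_2^{2\alpha_2-2}$, and no sign bookkeeping over many monomials is required. That bookkeeping is genuinely fragile. In Appendix~\ref{app:hessian}, the coefficient containing $s^*(2\alpha_2\beta_1-2\alpha_1\beta_2)^2$ is positive, not negative, for e.g.\ $(\alpha_1,\beta_1,\alpha_2,\beta_2)=(2,100,100,2)$ when $s^*>0$. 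A correct global analysis would tell you more about where the Hessian fails to be positive semidefinite, but non-convexity needs only one bad point.

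Your route also pins down the exact hypotheses, and both caveats you flag are necessary. The case $p\equiv 0$ occurs for $\lambda=(1,0,0)$, $c=1$. For $\alpha_3=0$, $\lambda=(1,1,0)$, $c=2$, the polynomial is $x_1^{\alpha_1}+x_2^{\alpha_2}$, which is convex.

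Your Step~3 idea can be pushed to a case-free proof. Every monomial of $p$ involves at least two of the variables, so $p$ vanishes on the three coordinate axes. If $p$ were convex, $\{p\le 0\}$ would be a convex set containing the axes, hence their convex hull $\mathbb{R}^3$. Then $p$ would be bounded above, hence constant, equal to $p(0)=0$. This version needs no sign conditions on the $\lambda_i$. It also covers the reading of the statement with pair terms $x_i^{2\alpha_i}$ as in \eqref{eq:regrouped}. Under that reading, restricting to $x_3=0$ leaves the binomial $\lambda_3(\lambda_1x_1^{2\alpha_1}x_2^{\alpha_2}+\lambda_2x_1^{\alpha_1}x_2^{2\alpha_2})$, so your Step~2 would not apply verbatim.
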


\begin{proof}
The argument parallels that of Theorem~\ref{thm:nonconvex-2var}. Computing the Hessian and analyzing the determinant of the leading principal minors shows that, for general $\alpha_i$, the Hessian fails to be positive semidefinite everywhere, the determinant condition failing because of the dominance of cross terms in the minor expansion, exactly as in the inequality derived in \ref{app:hessian}.
\end{proof}

\section{The AHI family inside \texorpdfstring{$\mathrm{SOS}\cap\mathrm{SONC}$}{SOS cap SONC}, and a product extension}\label{sec:cones}

Having established in Section~\ref{sec:family} that every non-negative AHI polynomial is a sum of squares, we now situate the family precisely relative to the two standard non-negativity certificates: SOS and SONC, the cone of sums of non-negative circuit polynomials introduced by \citet{iliman2016amoebas}. We show that every AHI polynomial belongs to both cones, that the containment in their intersection is strict, and that closing the family under multiplication produces a cone that leaves SONC entirely. Throughout, $\SONC_{n,2d}$ denotes the SONC cone of \citep[Definition~1.3]{iliman2016amoebas} and $\Sigma_{n,2d}$ the SOS cone.

\subsection{AHI is a proper subcone of \texorpdfstring{$\Sigma_{n,2d}\cap\SONC_{n,2d}$}{SOS cap SONC}}\label{subsec:ahi-in-both}

\begin{proposition}\label{prop:ahi-in-sonc}
Every non-negative AHI polynomial $p \in \mathcal{P}$, as defined in \eqref{eq:family}, lies in the SONC cone $\SONC_{n,2d}$.
\end{proposition}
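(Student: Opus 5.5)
The plan is to reuse the decomposition from the proof of Theorem~\ref{thm:sufficient}, but to read each summand as a circuit polynomial rather than as a square. By Theorem~\ref{thm:necessary}, a non-negative $p\in\mathcal{P}$ satisfies $K=\sum_i\lambda_i^2-c\ge -2\sum_{i<j}\lambda_i\lambda_j$. So we may write $K=-2\sum_{i<j}\lambda_i\lambda_j+k$ with $k\ge 0$, and by \eqref{eq:regrouped}
\[
p(x)=\sum_{1\le i<j\le n}\lambda_i\lambda_j\big(x^{\alpha(i)}-x^{\alpha(j)}\big)^2\prod_{m\ne i,j}x^{\alpha(m)} \;+\; k\prod_{i=1}^n x^{\alpha(i)} .
\]
Since $\SONC_{n,2d}$ is a convex cone, it suffices to show that each summand is a non-negative circuit polynomial, or a sum of such, with non-negative coefficients.

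\textbf{Residual term.} The term $k\prod_i x^{\alpha(i)}=k\,x^{\sum_i\alpha(i)}$ has $k\ge 0$ and an even exponent vector, because every $\alpha(i)$ is even. It is therefore a monomial square, which is a degenerate circuit polynomial; in \citep{iliman2016amoebas} monomial squares are included in SONC. This summand is in the cone.

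\textbf{Pair terms.} Fix $i<j$ and put $\beta=\sum_{m\ne i,j}\alpha(m)$. The summand expands to
\[
\lambda_i\lambda_j\Big(x^{2\alpha(i)+\beta}+x^{2\alpha(j)+\beta}-2x^{\alpha(i)+\alpha(j)+\beta}\Big).
\]
Its support has two even outer exponents, $u=2\alpha(i)+\beta$ and $v=2\alpha(j)+\beta$, and one inner exponent $\frac12(u+v)$, the midpoint.

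Case $\alpha(i)\ne\alpha(j)$: the support $\{u,v\}$ is affinely independent, so the trinomial is a circuit polynomial with barycentric weights $(\tfrac12,\tfrac12)$. The circuit number from \citep{iliman2016amoebas} is
\[
\Theta=\Big(\tfrac{1}{1/2}\Big)^{1/2}\Big(\tfrac{1}{1/2}\Big)^{1/2}=2 .
\]
The inner coefficient $-2$ satisfies $|{-2}|\le\Theta$, so the trinomial is a non-negative circuit polynomial. Equivalently, it is the weighted AM--GM equality case.

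Case $\alpha(i)=\alpha(j)$: the summand vanishes identically and is dropped.

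Summing these contributions with the non-negative coefficients $\lambda_i\lambda_j$ places $p$ in $\SONC_{n,2d}$. The degree stays at most $2d$ because each summand's support lies in the Newton polytope of $p$.

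The main obstacle is a definitional one: matching the exact convention of \citep[Definition~1.3]{iliman2016amoebas}. Two points need checking. First, whether monomial squares and circuits with a 1-dimensional simplex (two vertices plus an interior point) are admitted. Second, whether coinciding monomials across different pairs $(i,j)$ cause any trouble; they do not, since a SONC certificate is just a sum of polynomials and exponents may repeat. I would cite the remark in that paper that circuits of any dimension $\le n$ are allowed, and confirm the circuit-number bound $|c|\le\Theta$ in the trinomial case directly via AM--GM, which gives $x^{u}+x^{v}\ge 2x^{(u+v)/2}$ for even $u,v$.
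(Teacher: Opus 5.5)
Your proposal is correct and is essentially the paper's proof. It uses the binomial-square decomposition of Theorem~\ref{thm:sufficient}, whose coefficient hypothesis you rightly obtain from non-negativity via Theorem~\ref{thm:necessary} (a step the paper leaves implicit), reads each pair term as a circuit polynomial on a segment with the midpoint as inner exponent, treats the residual as a monomial square, and concludes by convexity of $\SONC_{n,2d}$.

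Your value $\Theta=2$ against inner coefficient $-2$ is the consistent one; the paper's ``$\Theta=1$, inner coefficient $-2$'' mixes normalizations, since scaling the outer coefficients to $\tfrac12$ gives $\Theta=1$ and inner coefficient $-1$. Your treatment of the vanishing case $\alpha(i)=\alpha(j)$ also closes a small omission in the paper.
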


\begin{proof}
By Theorem~\ref{thm:sufficient}, a non-negative $p\in\mathcal{P}$ decomposes as
\begin{equation}\label{eq:ahi-sonc-decomp}
p \;=\; \sum_{1\le i<j\le n} \lambda_i\lambda_j\,g_{ij}(x) \;+\; k\prod_{i=1}^n x^{\alpha(i)},
\qquad
g_{ij}(x)=\big(x^{\alpha(i)}-x^{\alpha(j)}\big)^2\!\!\prod_{k\neq i,j}\! x^{\alpha(k)},
\end{equation}
with $k\ge 0$. Fix a pair $i<j$. The polynomial $g_{ij}$ is supported by the three exponents
\[
2\alpha(i)+\!\!\sum_{k\neq i,j}\!\alpha(k),\qquad
2\alpha(j)+\!\!\sum_{k\neq i,j}\!\alpha(k),\qquad
\alpha(i)+\alpha(j)+\!\!\sum_{k\neq i,j}\!\alpha(k),
\]
the third being the midpoint of the first two. Its Newton polytope is therefore a one-dimensional even lattice simplex (a segment) with the remaining support point in its relative interior, so $g_{ij}$ is a circuit polynomial in the sense of \citep[Section~1]{iliman2016amoebas}. Its circuit number is $\Theta_{g_{ij}}=1$ and its inner coefficient is $-2$ after normalization, therefore $g_{ij}$ is non-negative and, being a perfect square times a monomial square, lies in $\SONC_{n,2d}$ by \citep[Theorem~3.8]{iliman2016amoebas}. The residual term $k\prod_i x^{\alpha(i)}$ is a monomial square and hence trivially SONC. Since $\SONC_{n,2d}$ is a convex cone, $p\in\SONC_{n,2d}$.
\end{proof}

\begin{corollary}\label{cor:ahi-both}
Every non-negative AHI polynomial is simultaneously SOS and SONC; that is,
$\AHI \subseteq \Sigma_{n,2d}\cap\SONC_{n,2d}$.
\end{corollary}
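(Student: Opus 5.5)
The corollary simply combines the two membership results already established, so the plan is to take an arbitrary non-negative $p\in\mathcal{P}$ of the form \eqref{eq:family} and check each cone separately, using the same coefficient $K=\sum_i\lambda_i^2-c$ throughout.

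\emph{SOS membership.} By Theorem~\ref{thm:necessary}, non-negativity of $p$ forces $K\ge -2\sum_{i<j}\lambda_i\lambda_j$; otherwise $p(1,\dots,1)<0$. Under exactly this inequality, Theorem~\ref{thm:sufficient} gives an explicit SOS decomposition. Concretely, write $K=-2\sum_{i<j}\lambda_i\lambda_j+k$ with $k\ge 0$. Then
\[
p=\sum_{i<j}\lambda_i\lambda_j\big(x^{\alpha(i)}-x^{\alpha(j)}\big)^2\prod_{l\ne i,j}x^{\alpha(l)}+k\prod_i x^{\alpha(i)}.
\]
Each summand is a non-negative multiple of the square of a polynomial, because every $x^{\alpha(l)}$ is a squared monomial and so $\prod_{l\ne i,j}x^{\alpha(l)}=\big(\prod_{l\ne i,j}x^{\alpha(l)/2}\big)^2$. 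Hence $p\in\Sigma_{n,2d}$. Degrees are preserved, since every square has degree equal to $\deg p$.

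\emph{SONC membership.} This is exactly Proposition~\ref{prop:ahi-in-sonc}, which I will invoke directly. The same decomposition \eqref{eq:ahi-sonc-decomp} shows that each $g_{ij}$ is a non-negative circuit polynomial supported on a segment with its midpoint, and that the residual term is a monomial square. Because $\SONC_{n,2d}$ is a convex cone and $\lambda_i\lambda_j,k\ge0$, it follows that $p\in\SONC_{n,2d}$.

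Intersecting the two memberships gives $\AHI\subseteq\Sigma_{n,2d}\cap\SONC_{n,2d}$. The only step needing care is a degenerate case of the circuit argument: when two exponents coincide, $\alpha(i)=\alpha(j)$, the term $g_{ij}$ vanishes identically, and when some $\lambda_i=0$ the corresponding terms drop out. In both cases the summand is zero or a monomial square, so it remains in both cones and the argument goes through unchanged.
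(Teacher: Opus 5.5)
Your proposal is correct and follows the paper's route. The paper's proof likewise just combines Theorem~\ref{thm:sufficient} and Proposition~\ref{prop:ahi-in-sonc}; you additionally make explicit that Theorem~\ref{thm:necessary} supplies the coefficient hypothesis, and you treat the degenerate case $\alpha(i)=\alpha(j)$. One harmless slip: your aside that every square has degree equal to $\deg p$ fails for non-homogeneous members such as Example~\ref{ex:ahi2}, where the $\{1,2\}$ square has degree $14<16$. It is not needed, since the squares only need degree at most $\deg p$, so membership in $\Sigma_{n,2d}$ is unaffected.
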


\begin{proof}
Immediate from Theorem~\ref{thm:sufficient} and Proposition~\ref{prop:ahi-in-sonc}.
\end{proof}

The containment of Corollary~\ref{cor:ahi-both} is strict and the obstruction is a rigidity in the coefficient pattern rather than in the support.

\begin{proposition}\label{prop:proper}
For $n\ge 4$ the containment is proper: $\AHI \subsetneq \Sigma_{n,2d}\cap\SONC_{n,2d}$.
\end{proposition}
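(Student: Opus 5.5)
We will exhibit, for each $n\ge 4$, an explicit polynomial $q$ that lies in $\Sigma_{n,2d}\cap\SONC_{n,2d}$ but is not an AHI polynomial. The remark preceding the statement suggests the construction: the obstruction should come from the \emph{coefficient pattern}, not from the support. So we keep exactly the support of an AHI polynomial and perturb its coefficients.

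Fix distinct squared monomials $x^{\alpha(1)},\dots,x^{\alpha(n)}$ with affinely generic exponents, for instance $x^{\alpha(i)}=x_i^2$. Write $m_{ij}^{(i)}=x^{2\alpha(i)+\sum_{k\ne i,j}\alpha(k)}$ and $M=\prod_i x^{\alpha(i)}$. By \eqref{eq:regrouped}, every member of $\mathcal{P}$ with this data has the form
\[
\sum_{i<j}\mu_{ij}\bigl(m_{ij}^{(i)}+m_{ij}^{(j)}\bigr)+K M .
\]
Two constraints are built in here. First, the coefficients of $m_{ij}^{(i)}$ and $m_{ij}^{(j)}$ coincide. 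Second, the off-diagonal coefficients satisfy $\mu_{ij}=\lambda_i\lambda_j$, a rank-one pattern.

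We take instead
\[
q=\sum_{i<j}\mu_{ij}\,\bigl(m_{ij}^{(i)}+m_{ij}^{(j)}-2M\bigr),\qquad \mu_{ij}\ge 0,
\]
where the $\mu_{ij}$ are chosen not of the form $\lambda_i\lambda_j$. For $n\ge4$ such a choice exists: put $\mu_{12}=\mu_{34}=1$ and all other $\mu_{ij}=0$. Any product representation $\lambda_1\lambda_2=\lambda_3\lambda_4=1$ forces all four $\lambda$'s nonzero, so $\lambda_1\lambda_3\neq0$, contradicting $\mu_{13}=0$. For $n=3$ any nonnegative triple with positive entries is of the form $\lambda_i\lambda_j$, via $\lambda_1=\sqrt{\mu_{12}\mu_{13}/\mu_{23}}$, etc. This is why the hypothesis $n\ge4$ is needed.

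Membership of $q$ in both cones is immediate from the proof of Proposition~\ref{prop:ahi-in-sonc}. Each summand is $\mu_{ij}g_{ij}$, which is a square times a monomial square and also a circuit polynomial. Both cones are convex, so $q\in\Sigma_{n,2d}\cap\SONC_{n,2d}$.

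The main obstacle is showing rigorously that $q\notin\AHI$ for \emph{every} choice of data, including different squared monomials $x^{\beta(i)}$, a possibly different number of them, and positive rescaling (Remark~\ref{rem:basic}). The plan is a support-matching argument:

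\begin{enumerate}
\item An AHI polynomial with $\lambda$'s having $r\ge2$ nonzero entries has exactly $r(r-1)$ non-central monomials $x^{2\beta(i)+\sum_{k\ne i,j}\beta(k)}$. These are distinct when the $\beta(i)$ are distinct, and the central monomial is $x^{\sum\beta(k)}$.
\item The support of $q$ consists of the four monomials $x_1^4x_2^2x_3^2x_4^2$, $x_1^2x_2^4x_3^2x_4^2$, $x_1^2x_2^2x_3^4x_4^2$, $x_1^2x_2^2x_3^2x_4^4$ times the fixed factor $\prod_{k>4}x_k^2$, together with the central monomial, whose coefficient is $-4$.
\item Matching supports requires $r(r-1)=4$, which has no integer solution. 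If some monomials collide (non-distinct $\beta$'s), the AHI polynomial degenerates into a monomial multiple of a lower-$r$ AHI polynomial, and the same count applies to it. Hence $q$ is not AHI.
\end{enumerate}

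Handling these degenerate coincidences and the zero $\lambda_i$ carefully is the delicate part. If the counting argument becomes messy, a fallback is available: choose all $\mu_{ij}>0$ but not rank one, for example $\mu_{12}=2$ and $\mu_{ij}=1$ otherwise with $n=4$. Then the support forces $r=4$ and $\beta=\alpha$ up to relabeling, so the rank-one condition yields a contradiction directly.
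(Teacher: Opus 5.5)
Your core argument is the paper's. You keep the support of an AHI polynomial with $x^{\alpha(i)}=x_i^2$, impose pair coefficients not of the form $\lambda_i\lambda_j$, get membership in $\Sigma_{n,2d}\cap\SONC_{n,2d}$ termwise from the binomial-square/circuit decomposition, and conclude from the violated product structure. Your fallback ($n=4$, $\mu_{12}=2$, all other $\mu_{ij}=1$) is exactly the paper's witness. The paper stops at $c_{12}c_{34}\neq c_{13}c_{24}$ for that fixed data and never considers other squared monomials. Your main witness $g_{12}+g_{34}$ (times $\prod_{k>4}x_k^2$) is equally good at that level of rigor and covers all $n\ge4$ at once.

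The step you add beyond the paper, however, rests on a false claim. In Step~1 you assert that the $r(r-1)$ non-central monomials are distinct whenever the $\beta(i)$ are distinct. But $2\beta(i)+\sum_{k\ne i,j}\beta(k)=\Sigma+\beta(i)-\beta(j)$ with $\Sigma=\sum_k\beta(k)$, so distinctness needs distinct \emph{differences}, not distinct points. For $x^{\beta(1)}=x_1^2$, $x^{\beta(2)}=x_1^4$, $x^{\beta(3)}=x_1^6$ one has $r=3$ but only the four non-central monomials $x_1^{8},x_1^{10},x_1^{14},x_1^{16}$. So ``$r(r-1)=4$ has no integer solution'' excludes nothing. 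Coefficient matching does not rescue it either: all four non-central coefficients of $q$ equal $1$, and $\lambda_1\lambda_2+\lambda_2\lambda_3=\lambda_1\lambda_3=1$ is solvable.

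What excludes such cases is geometry, not counting. Let $S$ be the set of distinct $\beta(i)$ with $\lambda_i>0$. Each non-central coefficient is a sum of products $\lambda_i\lambda_j\ge0$, so nothing cancels, and the non-central support is exactly $\Sigma+((S-S)\setminus\{0\})$. If $q$ were AHI the two supports would coincide. Both are centrally symmetric, so their centres agree, and you would need $(S-S)\setminus\{0\}=\{\pm u,\pm v\}$ with $u=2(e_1-e_2)$, $v=2(e_3-e_4)$. If $a,b,c\in S$ were distinct, then $a-b$, $b-c$ and $a-c=(a-b)+(b-c)$ would all lie in $\{\pm u,\pm v\}$. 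This is impossible because $u,v$ are linearly independent. Hence $|S|\le2$, giving at most two non-central monomials, not four.

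Two smaller points:
\begin{itemize}
\item Step~2 miscomputes the support. The non-central monomials of $q$ are $x_1^4x_3^2x_4^2$, $x_2^4x_3^2x_4^2$, $x_1^2x_2^2x_3^4$, $x_1^2x_2^2x_4^4$ (times $\prod_{k>4}x_k^2$), not $x_i^2$ times the central monomial.
\item In the fallback, the claim that the support forces $\beta=\alpha$ needs proof. The choice $x^{\beta(i)}=\prod_{k\ne i}x_k^2$ has the same difference set $\{2(e_i-e_j)\}$. It is excluded only by the centre condition $\sum_k\beta(k)=(2,2,2,2)$ together with $\beta(k)\ge0$.
\end{itemize}
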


\begin{proof}
By \eqref{eq:regrouped}, the coefficient attached to the pair $\{i,j\}$ in an AHI polynomial equals $\lambda_i\lambda_j$. Consequently the pair coefficients $c_{ij}=\lambda_i\lambda_j$ of any AHI polynomial obey the multiplicative relations
\begin{equation}\label{eq:coeff-rigidity}
c_{ij}\,c_{kl} \;=\; c_{ik}\,c_{jl} \;=\; c_{il}\,c_{jk}
\qquad\text{for all distinct } i,j,k,l,
\end{equation}
which are non-trivial as soon as $n\ge 4$. Now take $n=4$, $\alpha(i)=2e_i$, and consider
\[
q \;=\; 2x_3^2x_4^2\big(x_1^2-x_2^2\big)^2 + x_2^2x_4^2\big(x_1^2-x_3^2\big)^2 + x_2^2x_3^2\big(x_1^2-x_4^2\big)^2
+ x_1^2x_4^2\big(x_2^2-x_3^2\big)^2 + x_1^2x_3^2\big(x_2^2-x_4^2\big)^2 + x_1^2x_2^2\big(x_3^2-x_4^2\big)^2 .
\]
Each summand is a monomial square times a binomial square, so $q\in\Sigma_{4,8}$, and each summand is a non-negative circuit polynomial exactly as in the proof of Proposition~\ref{prop:ahi-in-sonc}, so $q\in\SONC_{4,8}$. However its pair coefficients are $c_{12}=2$ and $c_{34}=c_{13}=c_{24}=c_{14}=c_{23}=1$, whence $c_{12}c_{34}=2\neq 1=c_{13}c_{24}$, violating \eqref{eq:coeff-rigidity}. Hence $q\notin\AHI$.
\end{proof}

\begin{remark}\label{rem:proper-meaning}
Proposition~\ref{prop:proper} shows the AHI family is a genuinely thin slice of $\Sigma_{n,2d}\cap\SONC_{n,2d}$, cut out by the rank-one condition \eqref{eq:coeff-rigidity} on the matrix of pair coefficients. The interesting question is therefore not that $\AHI$ is a strict subcone, which is to be expected of so structured a family, but what happens when the family is closed under multiplication. This is the subject of the next subsection.
\end{remark}

\subsection{The product AHI cone \texorpdfstring{$\PiAHI$}{Pi AHI}}\label{subsec:piahi}

\begin{definition}\label{def:piahi}
Let $\AHI$ denote the set of non-negative AHI polynomials as in \eqref{eq:family}. Define the \emph{product AHI cone}
\[
\PiAHI \;=\; \operatorname{cone}\Big\{\, \textstyle\prod_{l=1}^{k} p_l \;:\;
k\in\mathbb{N},\ p_1,\dots,p_k \in \AHI \,\Big\},
\]
the conic hull of finite products of AHI polynomials.
\end{definition}

\begin{proposition}\label{prop:pi-sos}
$\PiAHI \subseteq \Sigma_{n,2d}$, and every element of $\PiAHI$ admits an SOS certificate in closed form.
\end{proposition}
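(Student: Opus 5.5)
The plan is to build the SOS certificate for a generator $\prod_{l=1}^k p_l$ explicitly, and then pass to conic combinations. Since $\Sigma_{n,2d}$ is a convex cone, closed under sums and non-negative scalings, it suffices to treat a single product of AHI polynomials with $p_1,\dots,p_k\in\AHI$. One caveat: the degree of a product exceeds the degree of each factor. Accordingly, I read the inclusion $\PiAHI\subseteq\Sigma_{n,2d}$ degree-wise: a product of total degree $2D$ lands in $\Sigma_{n,2D}$, and the notation refers to the union over degrees.

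\textbf{Step 1 (closed form for one factor).} By Theorems~\ref{thm:sufficient} and~\ref{thm:necessary}, every non-negative $p_l\in\mathcal{P}$ satisfies $K\ge -2\sum_{i<j}\lambda_i\lambda_j$. Proposition~\ref{prop:ahi-in-sonc}, specifically \eqref{eq:ahi-sonc-decomp}, then gives the explicit decomposition
\[
p_l=\sum_{i<j}\lambda^{(l)}_i\lambda^{(l)}_j\,\big(x^{\alpha_l(i)}-x^{\alpha_l(j)}\big)^2\prod_{m\ne i,j}x^{\alpha_l(m)}+k_l\prod_i x^{\alpha_l(i)},
\]
where $k_l=K_l+2\sum_{i<j}\lambda^{(l)}_i\lambda^{(l)}_j\ge 0$. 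Each $x^{\alpha_l(m)}$ is a squared monomial, say $(x^{\alpha_l(m)/2})^2$. Hence every term has the form $c\,s^2$ with $c\ge 0$ and $s$ an explicit polynomial. Concretely, $s$ is either $x^{\alpha_l(i)+\frac12\sum_{m\neq i,j}\alpha_l(m)}-x^{\alpha_l(j)+\frac12\sum_{m\neq i,j}\alpha_l(m)}$ or the central monomial $\prod_i x^{\alpha_l(i)/2}$. So $p_l=\sum_{t\in T_l}c_{l,t}\,s_{l,t}^2$, with all coefficients and square roots read off from $\lambda^{(l)}$, $c_l$, $\alpha_l$, and no SDP required.

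\textbf{Step 2 (products).} Expanding the product gives
\[
\prod_{l=1}^k p_l=\sum_{(t_1,\dots,t_k)\in T_1\times\cdots\times T_k}\Big(\prod_l c_{l,t_l}\Big)\Big(\prod_l s_{l,t_l}\Big)^2,
\]
using $(\prod s)^2=\prod s^2$. Every coefficient is non-negative, so this is an explicit SOS with at most $\prod_l\big(\binom{n_l}{2}+1\big)$ squares. A conic combination $\sum_r\mu_r\prod_l p_{r,l}$ with $\mu_r\ge0$ is then certified by concatenating these lists with weights $\mu_r$. Equivalently, the Gram matrix is the Kronecker product of the rank-structured factor Gram matrices, restricted to the product half-supports.

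The main obstacle is bookkeeping rather than mathematics. It has three parts: the degree convention mentioned above; making sure that the "closed form" of Step 1 really covers every non-negative member of $\mathcal{P}$, including the boundary case $k_l=0$ and factors with some $\lambda_i=0$; and confirming that no cancellation problem arises. Cancellation is in fact harmless: merged monomials inside different squares do not affect validity, because a sum of non-negatively weighted squares is SOS whatever overlaps occur. I would state the certificate size bound as a remark and leave the Kronecker-Gram viewpoint for the factorized hierarchy of Section~\ref{sec:numerics}.
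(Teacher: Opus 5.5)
Your proof is correct and follows the paper's argument: take the explicit binomial-square decomposition \eqref{eq:ahi-sonc-decomp} of each non-negative factor, multiply these decompositions out termwise, and close under conic combinations. Your degree caveat (reading $\Sigma_{n,2d}$ as a union over degrees) is a fair repair of the paper's loose notation. One small correction to your Kronecker aside: the factor Gram matrices should be aggregated over tuples that produce the same monomial, i.e.\ $A\big(\bigotimes_l G_l\big)A^{T}$ for a $0/1$ matrix $A$, rather than restricted; this does not affect the main argument.
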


\begin{proof}
The SOS cone is closed under multiplication and under non-negative combination: if $p=\sum_i q_i^2$ and $p'=\sum_j r_j^2$, then $pp' = \sum_{i,j}(q_i r_j)^2$. Since every $p_l\in\AHI$ carries the explicit decomposition \eqref{eq:ahi-sonc-decomp} of Theorem~\ref{thm:sos-rep}, the product $\prod_l p_l$ is obtained as a sum of squares by multiplying out those decompositions, with no semidefinite program required. Conic combinations of SOS polynomials are SOS.
\end{proof}

\begin{lemma}\label{lem:circuit-zero-dim}
Let $g\in\mathbb{R}[x_1,\dots,x_n]$ be a non-negative circuit polynomial that is not identically zero, and let $m=\dim\operatorname{New}(g)$ be the dimension of its Newton polytope. Then the real zero set of $g$ in the torus $T=(\mathbb{R}^{*})^{n}$ is contained in a finite union of sets of dimension $n-m$. In particular, if $Z_{0}\subseteq T$ is an irreducible set of dimension $n-1$ and $g$ vanishes identically on $Z_{0}$, then $m=1$: the Newton polytope of $g$ is a segment.
\end{lemma}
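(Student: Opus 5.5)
We first fix the structure of a circuit polynomial in the sense of \citet{iliman2016amoebas}. Write
\[
g(x)=\sum_{j=0}^{m} b_j x^{\beta(j)} + c\, x^{y},
\]
where $\beta(0),\dots,\beta(m)$ are affinely independent even lattice points, $b_j>0$, and $y$ lies in the relative interior of the simplex $\Delta=\operatorname{conv}\{\beta(0),\dots,\beta(m)\}$. Write $y=\sum_j\lambda_j\beta(j)$ with barycentric weights $\lambda_j>0$ and $\sum_j\lambda_j=1$. The degenerate case where $g$ is a single monomial square is handled separately: then $g$ has no zeros in $T$ at all, and the claim is vacuous.

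The first step is to pass to logarithmic coordinates on each orthant of $T$. Fix a sign pattern and put $x_i=\pm e^{w_i}$. Then each monomial becomes $x^{\beta}=\pm e^{\langle \beta,w\rangle}$, and the even monomials $x^{\beta(j)}$ become $e^{\langle\beta(j),w\rangle}>0$. Next apply the weighted AM--GM inequality to the terms $b_je^{\langle\beta(j),w\rangle}$ with weights $\lambda_j$:
\[
\sum_j b_j e^{\langle\beta(j),w\rangle}\;\ge\;\prod_j\Big(\frac{b_j}{\lambda_j}\Big)^{\lambda_j} e^{\langle y,w\rangle}=\Theta_g\, e^{\langle y,w\rangle}.
\]
Non-negativity of $g$ forces $|c|\le\Theta_g$. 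A zero of $g$ in this orthant therefore requires two things: $|c|=\Theta_g$ with the appropriate sign, and equality in AM--GM. Equality in AM--GM means that all the quantities $b_je^{\langle\beta(j),w\rangle}/\lambda_j$ coincide. This gives the $m$ equations
\[
\langle\beta(j)-\beta(0),w\rangle=\log\frac{\lambda_j b_0}{\lambda_0 b_j},\qquad j=1,\dots,m.
\]
Since the vectors $\beta(j)-\beta(0)$ are linearly independent, this is an affine subspace of $\mathbb{R}^n$ of dimension exactly $n-m$.

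The image of that affine subspace under $w\mapsto(\pm e^{w_i})_i$ is a smooth submanifold of dimension $n-m$. Taking the union over the $2^n$ orthants, the zero set of $g$ in $T$ lies in a finite union of sets of dimension $n-m$. This proves the first assertion.

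For the second assertion, suppose $g$ vanishes on an irreducible $Z_0\subseteq T$ with $\dim Z_0=n-1$. Then $Z_0$ is contained in the finite union above, so $n-1\le n-m$, that is, $m\le1$. We still need to exclude $m=0$. If $m=0$, the Newton polytope is a point, so the inner point $y$ coincides with $\beta(0)$ and $g$ is a monomial square. A monomial square is positive on $T$, which contradicts vanishing on $Z_0\neq\emptyset$. Hence $m=1$.

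The main obstacle is the bookkeeping of the definition. We have to confirm that the circuit polynomials of \citet{iliman2016amoebas} allow exactly this form, including the case where the inner exponent $y$ has odd entries, so that the sign of $x^{y}$ varies between orthants. That case is why we argue orthant by orthant. We also have to make precise which notion of dimension of a set is meant: here it is the dimension of a real-analytic or semialgebraic set, and we use that dimension is monotone under inclusion in a finite union.
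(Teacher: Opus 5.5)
Your proof is correct and reaches the same geometric picture as the paper: the zeros lie in finitely many $(n-m)$-dimensional level sets of the monomial map $x\mapsto(x^{\beta(j)-\beta(0)})_{j}$. You get there by a more elementary route.

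The paper normalizes $\alpha(0)=0$ and introduces the homomorphism $\pi(x)=(x^{\alpha(1)},\dots,x^{\alpha(m)})$ on $T$, whose fibres are cosets of an $(n-m)$-dimensional subgroup. It then imports from Iliman--de Wolff (Theorem~3.8, Proposition~3.4, Corollary~3.9) that a non-negative circuit polynomial with $|c|=\Theta_g$ vanishes only at its norm minimizer, so only finitely many fibres carry zeros.

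You re-derive that input yourself. In logarithmic coordinates on each orthant, the equality case of weighted AM--GM places the zeros in the explicit binomial set $\{x^{\beta(j)-\beta(0)}=\lambda_j b_0/(\lambda_0 b_j),\ j=1,\dots,m\}$. Its dimension $n-m$ follows at once from the linear independence of the $\beta(j)-\beta(0)$.

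The paper's version is shorter and coordinate-free. Yours is self-contained and handles an inner exponent $y$ with odd entries more cleanly. Working orthant by orthant, you never need the paper's assertion that $|g|$ is constant on fibres of $\pi$. That assertion fails literally when $x^{y}$ changes sign within a fibre. For $g=(x_1-x_2)^2$ divided by $x_1^2$, the fibre of $\pi(x)=x_1^{-2}x_2^2$ over $1$ is $\{x_2=\pm x_1\}$, and the normalized polynomial equals $0$ on one branch and $4$ on the other. The paper's conclusion survives this.

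One small correction. Non-negativity forces $|c|\le\Theta_g$ only when $y\notin(2\mathbb{N})^n$; for even $y$ it forces only $c\ge-\Theta_g$. For example, $x_1^4+x_2^4+100x_1^2x_2^2$ is a non-negative circuit polynomial with $c=100>2=\Theta_g$. What your argument actually uses is $c\,\sigma^{y}\ge-\Theta_g$ on the orthant with sign pattern $\sigma\in\{\pm1\}^n$. This follows by evaluating $g$ at a point of the non-empty AM--GM equality subspace. With that reading, a zero forces $c\,\sigma^{y}=-\Theta_g$ and equality in AM--GM, exactly as you say, so the conclusion is unaffected.
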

 
\begin{proof}
Write $g=\sum_{j=0}^{m} b_j x^{\alpha(j)} + c\,x^{y}$ in the notation of \citep[Section~1]{iliman2016amoebas}, where $\{\alpha(0),\dots,\alpha(m)\}$ affinely spans $\operatorname{New}(g)$ and $y\in\operatorname{New}(g)$. Factoring out the monomial $x^{\alpha(0)}$, which is a unit on $T$ and hence changes no zeros, we may assume $\alpha(0)=0$, so that $\alpha(1),\dots,\alpha(m)$ are linearly independent in $\mathbb{Z}^{n}$ and $y=\sum_{j}\lambda_j\alpha(j)$ is a rational combination of them.
 
Consider the group homomorphism
\[
\pi: T \longrightarrow (\mathbb{R}^{*})^{m},
\qquad
\pi(x) = \big(x^{\alpha(1)},\dots,x^{\alpha(m)}\big).
\]
Because the exponent vectors $\alpha(1),\dots,\alpha(m)$ are linearly independent, $\pi$ is a surjective homomorphism of algebraic groups with kernel of dimension $n-m$, and each fibre $\pi^{-1}(w)$ is a coset of that kernel, hence of dimension $n-m$ (it has finitely many connected components, coming from the sign choices).
 
Every monomial of $g$ is, up to sign, a monomial function of $x^{\alpha(1)},\dots,x^{\alpha(m)}$: this is immediate for the $x^{\alpha(j)}$, and for $x^{y}$ it follows from $y=\sum_j\lambda_j\alpha(j)$ after passing to the common denominator $\mu$ of the $\lambda_j$ and replacing $\pi$ by the corresponding $\mu$-th power map, which does not change fibre dimensions. Consequently $|g|$ is constant on each fibre of $\pi$, and the zero set of $g$ in $T$ is a union of fibres over the zero set $W\subseteq(\mathbb{R}^{*})^{m}$ of the induced function.
 
Now $g$ is non-negative, so by \citep[Theorem~3.8]{iliman2016amoebas} either $|c|<\Theta_g$, in which case $g>0$ on $T$ and $W=\varnothing$, or $|c|=\Theta_g$, in which case by \citep[Proposition~3.4 and Corollary~3.9]{iliman2016amoebas} $g$ attains the value zero exactly at the norm minimizer, so $W$ is finite with at most $2^{m}$ points. In either case the zero set of $g$ in $T$ is a finite union of fibres, each of dimension $n-m$.
 
For the final statement, a set of dimension $n-1$ cannot be contained in a finite union of sets of dimension $n-m$ unless $n-m\ge n-1$, i.e.\ $m\le 1$; and $m=0$ would make $g$ a single monomial square, which does not vanish on $T$. Hence $m=1$.
\end{proof}
 
\begin{theorem}\label{thm:pi-not-sonc}
$\PiAHI \not\subseteq \SONC_{n,2d}$: the product AHI cone is not contained in the SONC cone.
\end{theorem}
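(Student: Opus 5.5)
The plan is to exhibit one explicit product of two AHI polynomials whose real zero set in the torus contains two hypersurfaces pointing in ``different directions''. Lemma~\ref{lem:circuit-zero-dim} then forces every circuit summand of a putative SONC decomposition to be a segment circuit vanishing on both hypersurfaces, which is impossible.

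\textbf{Step 1 (the witness).} Work with $n=3$ variables and degree $2d=8$. Take the two-variable AHI instance of \eqref{eq:family} with $\lambda_1=\lambda_2=1$ and $c=4$. Then
\[
(\lambda_1a_1+\lambda_2a_2)(\lambda_1a_2+\lambda_2a_1)-4a_1a_2=(a_1-a_2)^2 ,
\]
and $K=2-4=-2=-2\lambda_1\lambda_2$, so Theorem~\ref{thm:sufficient} applies and the polynomial is non-negative. Substituting $a_1=x_1^2,a_2=x_2^2$ gives $p_1=(x_1^2-x_2^2)^2\in\AHI$. Substituting $a_1=x_1^2,a_2=x_3^2$ gives $p_2=(x_1^2-x_3^2)^2\in\AHI$. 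Set
\[
f=p_1p_2=(x_1^2-x_2^2)^2(x_1^2-x_3^2)^2\in\PiAHI .
\]
I will show $f\notin\SONC_{3,8}$.

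\textbf{Step 2 (reduction to summands).} Suppose $f=\sum_k g_k$ with each $g_k$ a non-negative circuit polynomial (monomial squares included), and discard the zero summands. Consider the irreducible sets
\[
Z_0=\{x\in T:\ x_1=x_2\},\qquad Z_1=\{x\in T:\ x_1=x_3\},
\]
both of dimension $2=n-1$. Since $f$ vanishes on $Z_0\cup Z_1$ and every $g_k\ge 0$, each $g_k$ vanishes identically on $Z_0$ and on $Z_1$. A monomial square has no zeros in $T$, so no such summand survives. For every remaining $g_k$, Lemma~\ref{lem:circuit-zero-dim} gives $m=1$: its Newton polytope is a segment, say with direction $w_k\in\mathbb{Z}^3\setminus\{0\}$.

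\textbf{Step 3 (direction clash, the main point).} I reuse the structure exhibited in the proof of Lemma~\ref{lem:circuit-zero-dim}. With $m=1$, the zero set of $g_k$ in $T$ is a finite union of fibres of $\pi(x)=x^{w_k}$ (up to the $\mu$-th power adjustment). That is, it is a finite union of level sets $\{x^{w_k}=t\}$ together with their sign-components.

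The irreducible set $Z_0$ lies in a finite union of closed sets, so it lies in one of them. Hence $x^{w_k}$ is constant on $Z_0$, up to finitely many signs and by continuity on its connected pieces. Now parametrize $Z_0$ by $(s,s,u)$ with $s,u\in\mathbb{R}^*$. This gives
\[
x^{w_k}=s^{(w_k)_1+(w_k)_2}\,u^{(w_k)_3},
\]
and its being constant forces $(w_k)_3=0$ and $(w_k)_1+(w_k)_2=0$, i.e.\ $w_k\parallel e_1-e_2$. The same argument on $Z_1$ forces $w_k\parallel e_1-e_3$. These two conditions are incompatible for $w_k\neq 0$.

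Therefore no nonzero summand can exist. That gives $f=0$, which contradicts $f\neq 0$, so $f\notin\SONC_{3,8}$.

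\textbf{Main obstacle.} The delicate step is Step~3: passing rigorously from ``$g_k$ vanishes on $Z_0$'' to ``$x^{w_k}$ is constant on $Z_0$''. This needs three things to be handled carefully: the sign-components of the fibres, the rational exponent of the inner term (the $\mu$-th power map), and the irreducibility of $Z_0$ in the real torus. I would first check that irreducibility suffices. If it does not, I would instead work on the connected component $\{x_1=x_2>0,\ x_3>0\}$ and use continuity of $x^{w_k}$ there, which yields the same exponent conditions.
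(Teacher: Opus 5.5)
Your proof is correct and is essentially the paper's argument: every summand vanishes on two torus hypersurfaces, Lemma~\ref{lem:circuit-zero-dim} forces a segment, and the segment's direction is pinned to two independent lines. The witness differs: you use the three-variable $(x_1^2-x_2^2)^2(x_1^2-x_3^2)^2$ with a shared variable, where the paper uses $4(x_1^2-x_2^2)^2(x_3^2-x_4^2)^2$ in four disjoint variables. Your Step~3 uses only that the zero set lies in finitely many level sets of $x^{w_k}$, with the continuity argument on the positive component settling the point you flagged, and is therefore more careful than the paper's Step~2, which asserts that every non-negative segment circuit equals $\mu(x^{\beta}-\tau x^{\gamma})^2$; that assertion fails, e.g.\ for $x_1^6-3x_1^2x_2^4+2x_2^6=(x_1^2-x_2^2)^2(x_1^2+2x_2^2)$.
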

 
\begin{proof}
Consider the two AHI polynomials (each the $n=2$ instance of \eqref{eq:sos-identity}; cf.\ the first row of Table~\ref{tab:sos_cases})
\[
p_1(x) = 2\big(x_1^2-x_2^2\big)^2, \qquad p_2(x) = 2\big(x_3^2-x_4^2\big)^2,
\]
and set
\[
H \;=\; p_1 p_2 \;=\; 4\big(x_1^2-x_2^2\big)^2\big(x_3^2-x_4^2\big)^2 \;\in\; \PiAHI .
\]
Suppose, for contradiction, that $H = \sum_{i} g_i$ with each $g_i$ a non-negative circuit polynomial. Work in the torus $T=(\mathbb{R}^{*})^{4}$ and put
\[
Z_{0} \;=\; \big\{x\in T: x_1=x_2\big\},
\qquad
Z_{1} \;=\; \big\{x\in T: x_3=x_4\big\} .
\]
Both are irreducible sets of dimension $3$, and $H$ vanishes identically on each.
 
\emph{Step 1: every summand vanishes on $Z_0\cup Z_1$.}
Let $x_{0}\in Z_{0}\cup Z_{1}$. Then $\sum_i g_i(x_{0}) = H(x_{0}) = 0$ is a sum of non-negative reals, so $g_i(x_{0})=0$ for every $i$. Hence each $g_i$ vanishes identically on both $Z_{0}$ and $Z_{1}$.
 
\emph{Step 2: every summand is a monomial multiple of a binomial square.}
Fix $g_i\not\equiv 0$. It vanishes on the $3$-dimensional irreducible set $Z_{0}$, so by Lemma~\ref{lem:circuit-zero-dim} its Newton polytope is a segment. A circuit polynomial supported on a segment with even vertices is, by \citep[Lemma~3.7 and Theorem~3.8]{iliman2016amoebas}, of the form
\[
g_i \;=\; \mu\,\big(x^{\beta}-\tau\,x^{\gamma}\big)^{2}, \qquad \mu>0,\ \tau>0,
\]
whose zero set in $T$ is the binomial hypersurface $\{x^{\delta}=\tau\}$ with $\delta=\beta-\gamma$.
 
\emph{Step 3: no such $\delta$ exists.}
Suppose $\{x^{\delta}=\tau\}\supseteq Z_{0}$. Substituting $x_2=x_1$ gives
\[
x_1^{\delta_1+\delta_2}\,x_3^{\delta_3}\,x_4^{\delta_4} \;=\; \tau
\qquad\text{for all } x_1,x_3,x_4\in\mathbb{R}^{*},
\]
and a monomial is constant on an open set only if all its exponents vanish. Varying $x_3$ gives $\delta_3=0$, varying $x_4$ gives $\delta_4=0$, and varying $x_1$ gives $\delta_1+\delta_2=0$; evaluating at $x=(1,1,1,1)$ then gives $\tau=1$. Hence $\delta\in\mathbb{Z}(e_1-e_2)$. Applying the identical argument to $Z_{1}$ yields $\delta\in\mathbb{Z}(e_3-e_4)$. Since
\[
\mathbb{Z}(e_1-e_2)\ \cap\ \mathbb{Z}(e_3-e_4) \;=\; \{0\},
\]
we get $\delta=0$ and $\tau=1$, so $g_i = \mu\,x^{2\gamma}\,(1-1)^{2} \equiv 0$, contrary to the choice of $g_i$.
 
Therefore every summand vanishes identically, contradicting $\sum_i g_i = H \not\equiv 0$. Hence $H\notin\SONC_{4,8}$, and since $H\in\PiAHI$ the claim follows.
\end{proof}

\begin{remark}\label{rem:sonc-prior-art}
Theorem~\ref{thm:pi-not-sonc} is a structured instance of the general fact that the SONC cone is not closed under multiplication, established by \citet{dressler2017positivstellensatz} (whose Lemma~4.1 shows in particular that not every square is SONC, so that SONC is neither a preordering nor a quadratic module) and revisited with a simpler construction by \citet{dressler2022optimization}. The contribution here is not the failure of multiplicative closure per se, but that the failure occurs already \emph{within the AM-HM-generated AHI family}, at the lowest possible multiplicative order --- two quartic binomial squares --- and that the resulting cone $\PiAHI$ carries closed-form SOS certificates inherited from its AHI factors (Proposition~\ref{prop:pi-sos}) rather than requiring an SDP for membership.
\end{remark}

The separation is not confined to SONC. Recall that $\SDSOS$, the cone of scaled diagonally dominant sums of squares of \citet{ahmadi2019dsos}, consists of polynomials admitting a Gram matrix $G$ for which some positive diagonal scaling $DGD$ is diagonally dominant; equivalently, $\SDSOS$ is the cone of sums of binomial squares. Every AHI polynomial is a sum of binomial squares by \eqref{eq:ahi-sonc-decomp} and hence lies in $\SDSOS$. Its products need not.
\pagebreak
\begin{proposition}\label{prop:pi-not-sdsos}
$\PiAHI \not\subseteq \SDSOS$.
\end{proposition}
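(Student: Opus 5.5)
The plan is to reuse the witness from Theorem~\ref{thm:pi-not-sonc}:
\[
H \;=\; p_1p_2 \;=\; 4\big(x_1^2-x_2^2\big)^2\big(x_3^2-x_4^2\big)^2 \in \PiAHI .
\]
We show it has no representation as a sum of binomial squares, which is the characterization of $\SDSOS$ recalled just before the statement. There are two routes. One could first prove $\SDSOS\subseteq\SONC_{n,2d}$, since every binomial square is a non-negative circuit polynomial or a sum of monomial squares, and then invoke Theorem~\ref{thm:pi-not-sonc}. Checking that every sign pattern of a binomial square fits the Iliman--de Wolff framework is slightly fiddly, so I would instead run the zero-set argument directly on binomials, which is more elementary.

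\emph{Step 1.} Suppose $H=\sum_i b_i(x)^2$ with each $b_i=a_i x^{\beta_i}+c_i x^{\gamma_i}$, where $a_i,c_i\in\mathbb{R}$. Let $Z_0=\{x\in T: x_1=x_2\}$ and $Z_1=\{x\in T: x_3=x_4\}$ be as in the proof of Theorem~\ref{thm:pi-not-sonc}. Since $H$ vanishes on $Z_0\cup Z_1$ and the summands are non-negative, every $b_i$ vanishes identically on $Z_0\cup Z_1$.

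\emph{Step 2.} Fix a summand with $b_i\not\equiv 0$ as a polynomial, and combine like terms so that we may assume $\beta_i\neq\gamma_i$. If only one monomial survives, then $b_i$ is nonzero everywhere on the torus $T$, which contradicts Step 1. So $a_i,c_i\neq 0$, and on $T$ the zero set of $b_i$ is $\{x^{\delta}=\tau\}$ with $\delta=\beta_i-\gamma_i\neq 0$ and $\tau=-c_i/a_i$. Note that $\tau$ need not be positive here, unlike in the SONC case.

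\emph{Step 3.} Repeat Step~3 of the proof of Theorem~\ref{thm:pi-not-sonc} verbatim. Containment of $Z_0$ in this hypersurface forces $\delta\in\mathbb{Z}(e_1-e_2)$, and containment of $Z_1$ forces $\delta\in\mathbb{Z}(e_3-e_4)$. Hence $\delta=0$, a contradiction.

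It follows that every $b_i\equiv 0$, so $H\equiv 0$, which is absurd. Therefore $H\notin\SDSOS$, while $H\in\PiAHI$.

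The main obstacle is the bookkeeping in Step~2. We need the characterization ``$\SDSOS$ = sums of squares of binomials'' to allow arbitrary real coefficients, and the degenerate cases (equal exponents, one vanishing coefficient) must be disposed of cleanly. The other point to check is that the monomial-constancy argument of Theorem~\ref{thm:pi-not-sonc} still works when $\tau$ may be negative. It does: on $Z_0$ the identity $x_1^{\delta_1+\delta_2}x_3^{\delta_3}x_4^{\delta_4}=\tau$ must hold on an open subset of the torus, which forces all these exponents to vanish, independently of the sign of $\tau$.
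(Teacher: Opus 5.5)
Your proof is correct, but it takes a different route from the paper's.

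\textbf{The paper's route.} It works with the Gram-matrix form of $\SDSOS$. It writes $H/4=(u_1-u_2-u_3+u_4)^2$ with $u_1=x_1^2x_3^2$, $u_2=x_1^2x_4^2$, $u_3=x_2^2x_3^2$, $u_4=x_2^2x_4^2$, and asserts that the Gram matrix is uniquely $vv^{T}$ with $v=(1,-1,-1,1)^{T}$. It then shows that for any positive diagonal $D$ the rank-one matrix $Dvv^{T}D=ww^{T}$ cannot be diagonally dominant: the four inequalities $|w_i|\ge\sum_{j\ne i}|w_j|$ would sum to $\sum_i|w_i|\ge 3\sum_i|w_i|$.

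\textbf{Your route.} You use the binomial-square description of $\SDSOS$ and transplant the zero-set argument of Theorem~\ref{thm:pi-not-sonc}. A binomial vanishing on $Z_0\cup Z_1$ has zero locus $\{x^{\delta}=\tau\}$ in the torus, which forces $\delta\in\mathbb{Z}(e_1-e_2)\cap\mathbb{Z}(e_3-e_4)=\{0\}$. Your handling of the degenerate cases and of a possibly negative $\tau$ is sound. Since you allow arbitrary exponents, you in fact show that $H$ is not a sum of squares of binomials in any monomial basis. The argument needs no Gram-matrix bookkeeping, and it shows that the SONC and SDSOS separations come from one and the same obstruction.

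\textbf{Why this matters here.} The paper's claim that the Newton polytope confines every Gram basis to $\{u_1,\dots,u_4\}$ is too quick. In fact $\tfrac12\operatorname{New}(H)$ contains nine lattice points, for instance $x_1x_2x_3^2$ and $x_1x_2x_3x_4$. So uniqueness of $G$ needs an extra step, which does go through:
\begin{enumerate}
\item Matching the coefficients of $x_1^2x_2^2x_3^4$ and its three analogues forces the diagonal Gram entries of the four edge-midpoint monomials to vanish.
\item Testing the $u$-block against $(1,1,1,1)^{T}$ then gives $G_{14}+G_{23}\ge 2$, hence $G_{14}=G_{23}=1$.
\item This in turn forces a zero diagonal entry for $x_1x_2x_3x_4$.
\end{enumerate}
Once this is supplied, the paper's approach gives more explicit information: the unique Gram matrix, and a concrete linear-algebraic reason why no diagonal scaling works. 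Yours is shorter and complete as written.
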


\begin{proof}
Let $H$ be as in Theorem~\ref{thm:pi-not-sonc} and put
\[
u_1=x_1^2x_3^2,\quad u_2=x_1^2x_4^2,\quad u_3=x_2^2x_3^2,\quad u_4=x_2^2x_4^2,
\]
so that $H/4 = (u_1-u_2-u_3+u_4)^2$. The Newton polytope of $H$ forces every Gram basis for $H$ to be contained in $\{u_1,u_2,u_3,u_4\}$, and the only multiplicative relation among these four degree-four monomials is $u_1u_4=u_2u_3$. Writing $H/4 = u^{T}Gu$ with $u=(u_1,u_2,u_3,u_4)^{T}$ and matching coefficients:
\begin{itemize}
\item the monomials $u_i^2$ are distinct, giving $G_{ii}=1$ for $i=1,\dots,4$;
\item the monomials $u_1u_2$, $u_1u_3$, $u_2u_4$, $u_3u_4$ are distinct, giving $G_{12}=G_{13}=G_{24}=G_{34}=-1$;
\item the single relation $u_1u_4=u_2u_3$ gives only the aggregate condition $G_{14}+G_{23}=2$.
\end{itemize}
Positive semidefiniteness forces $|G_{14}|\le\sqrt{G_{11}G_{44}}=1$ and $|G_{23}|\le 1$, so $G_{14}=G_{23}=1$ and the Gram matrix is uniquely $G=vv^{T}$ with $v=(1,-1,-1,1)^{T}$.

Suppose $DGD$ were diagonally dominant for some positive diagonal $D=\operatorname{diag}(d_1,\dots,d_4)$. Then $DGD=ww^{T}$ with $w=Dv$, all $|w_i|>0$, and diagonal dominance of $ww^{T}$ reads $w_i^2\ge\sum_{j\ne i}|w_i w_j|$, i.e.\ $|w_i|\ge\sum_{j\ne i}|w_j|$ for each $i$. Summing these four inequalities gives $\sum_i |w_i| \ge 3\sum_i |w_i|$, which is impossible. Since $G$ is the unique Gram matrix of $H/4$, we conclude $H\notin\SDSOS$.
\end{proof}

\subsection{Relation between \texorpdfstring{$\PiAHI$}{Pi AHI} and SONC}\label{subsec:relation}

\begin{proposition}\label{prop:neither-contains}
Neither of $\PiAHI$ and $\SONC_{n,2d}$ contains the other. However, their intersection is full-dimensional since $\AHI\subseteq \PiAHI\cap\SONC_{n,2d}$.
\end{proposition}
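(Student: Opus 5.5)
The statement has three parts, and I will treat them in turn: $\PiAHI\not\subseteq\SONC_{n,2d}$, $\SONC_{n,2d}\not\subseteq\PiAHI$, and the full-dimensionality of the intersection.

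\emph{Part 1.} This is exactly Theorem~\ref{thm:pi-not-sonc}: $H=p_1p_2$ lies in $\PiAHI$ but not in $\SONC_{4,8}$.

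\emph{Part 2.} Here I need a SONC polynomial that is not in $\PiAHI$. The cleanest obstruction is Proposition~\ref{prop:pi-sos}: $\PiAHI\subseteq\Sigma_{n,2d}$. It therefore suffices to exhibit a SONC polynomial that is not SOS, and the Motzkin form is the natural choice,
\[
M(x,y,z)=x^4y^2+x^2y^4-3x^2y^2z^2+z^6 .
\]
It is a single circuit polynomial. Its outer exponents $(4,2,0),(2,4,0),(0,0,6)$ are even vertices of a simplex, and the inner point $(2,2,2)$ is their barycenter with barycentric weights $1/3$ each. Its circuit number is $\Theta_M=\prod_j (b_j/\lambda_j)^{\lambda_j}=3$, and the inner coefficient satisfies $|-3|=3\le\Theta_M$. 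So \citep[Theorem~3.8]{iliman2016amoebas} gives $M\in\SONC_{3,6}$. Since $M\notin\Sigma_{3,6}$ \citep{motzkin1967arithmetic}, we get $M\notin\PiAHI$.

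For general $(n,2d)$ with $n\ge3$ and $2d\ge6$, the same example embeds by using the first three variables. If needed, I would multiply by a monomial square to reach the degree. A monomial square times a circuit polynomial is again a circuit polynomial, and non-SOS is preserved because $x^{2\beta}f$ being SOS forces $f$ to be SOS, by dividing each square by $x^\beta$. The main subtlety is that monomial-multiple argument, together with fixing the ambient $(n,2d)$. I would state the result for the dimensions where the Motzkin form embeds, consistent with Theorem~\ref{thm:pi-not-sonc}, which already uses $n=4$ and degree $8$; for instance $z^2M$ works at $(4,8)$.

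\emph{Part 3.} The containment $\AHI\subseteq\PiAHI$ is the case $k=1$ of Definition~\ref{def:piahi}. The containment $\AHI\subseteq\SONC_{n,2d}$ is Proposition~\ref{prop:ahi-in-sonc}. Together these give $\AHI\subseteq\PiAHI\cap\SONC_{n,2d}$.

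For full-dimensionality I would argue inside the ambient space of forms of degree $2d$. The intersection contains $\sum_{|\beta|=d} x^{2\beta}$. This is a monomial-square sum, and each monomial square is a degenerate AHI product, namely the residual term with all $\lambda_i=0$ and $c<0$ as in Theorem~\ref{thm:sufficient}. It is also SOS. Adding to it any small perturbation still leaves the result in both cones, since a small perturbation stays SOS and stays SONC. More carefully, I would say that the intersection contains the cone generated by all monomial squares and all AHI binomial-square terms $g_{ij}$. The pure powers $x_i^{2d}$ and the $g_{ij}$ with varying exponents span the relevant coefficient space, so the cone has nonempty interior.

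This last step is the only delicate one: showing that the spanned directions are full-dimensional. If it proves troublesome, I would weaken the claim to nonempty interior relative to the span of even-exponent directions, which is what the statement's phrase ``since $\AHI\subseteq\cdots$'' actually supports.
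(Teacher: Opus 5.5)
Your Parts 1 and 2, and the containment $\AHI\subseteq\PiAHI\cap\SONC_{n,2d}$, follow the paper's argument exactly: Theorem~\ref{thm:pi-not-sonc}, then the Motzkin form with Proposition~\ref{prop:pi-sos}, then $k=1$ with Proposition~\ref{prop:ahi-in-sonc}. Your extra details are correct: $\Theta_M=3$, the monomial-square embedding, and the division argument showing non-SOS is preserved.

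\textbf{The gap is the full-dimensionality step.} Your perturbation argument checks that a perturbed polynomial stays SOS. But the cone it must stay in is $\PiAHI$, not $\Sigma_{n,2d}$, and that fails. Every AHI polynomial is built from squared monomials, so every element of $\PiAHI$ is supported on $(2\mathbb{N})^n$. Hence $\PiAHI\subseteq E:=\operatorname{span}\{x^{2\gamma}\}$, which is a proper subspace for $n\ge 2$. For example, $\sum_{|\beta|=d}x^{2\beta}+\varepsilon x_1x_2^{2d-1}\notin\PiAHI$ for every $\varepsilon\neq 0$. So $\PiAHI\cap\SONC_{n,2d}$ has empty interior in the ambient space of (forms of) degree $2d$, and the full-dimensionality clause, read literally, cannot be proved. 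The paper's own proof offers nothing beyond the containment $\AHI\subseteq\PiAHI\cap\SONC_{n,2d}$, which produces no interior points either. Your ``more careful'' version also cannot succeed in the ambient space, because the monomial squares and the $g_{ij}$ all lie in $E$.

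Your fallback is the correct statement, and it is not delicate. The monomial squares $x^{2\gamma}$ ($|\gamma|=d$ for forms, $|\gamma|\le d$ for polynomials) are degenerate AHI polynomials (all $\lambda_i=0$, $c<0$). They lie in both cones and form a basis of $E$. Their open conic hull, the positive orthant in that basis, is therefore an open subset of $E$ inside the intersection, so the intersection is full-dimensional relative to $E$. The $g_{ij}$ are not needed.

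The same parity observation also gives a dimension-free version of Part 2. Take $x_1^{2d-2}(x_1-x_2)^2$: its outer exponents $2de_1$ and $(2d-2)e_1+2e_2$ are even, its inner point is their midpoint, and $\Theta=2=|-2|$, so it is a non-negative circuit polynomial. It has the odd exponent $(2d-1)e_1+e_2$, so it lies in $\SONC_{n,2d}\setminus\PiAHI$ for all $n\ge 2$, $d\ge 1$. This avoids both the Motzkin embedding and the restriction $n\ge 3$, $2d\ge 6$.
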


\begin{proof}
That $\PiAHI\not\subseteq\SONC_{n,2d}$ is Theorem~\ref{thm:pi-not-sonc}. Conversely, the Motzkin form $M(x,y,z)=x^4y^2+x^2y^4-3x^2y^2z^2+z^6$ is a non-negative circuit polynomial, hence resides in $\SONC_{3,6}$, but is not SOS \citep{motzkin1967arithmetic}; since $\PiAHI\subseteq\Sigma_{n,2d}$ by Proposition~\ref{prop:pi-sos}, we get $M\notin\PiAHI$ and so $\SONC_{n,2d}\not\subseteq\PiAHI$. Finally, $\AHI\subseteq\PiAHI$ (products of length one) and $\AHI\subseteq\SONC_{n,2d}$ by Proposition~\ref{prop:ahi-in-sonc}, so the intersection contains the full AHI family.
\end{proof}

Summarizing Corollary~\ref{cor:ahi-both} and Propositions~\ref{prop:pi-sos}, \ref{prop:pi-not-sdsos} and \ref{prop:neither-contains}, the picture is
\[
\AHI \;\subseteq\; \SDSOS\cap\SONC_{n,2d}\cap\Sigma_{n,2d},
\qquad
\PiAHI \;\subseteq\; \Sigma_{n,2d},
\qquad
\PiAHI \not\subseteq \SONC_{n,2d},\quad
\PiAHI \not\subseteq \SDSOS .
\]
Thus $\PiAHI$ furnishes a family of polynomials that is explicitly SOS, originates from the AM-HM inequality, and lies outside both of the standard tractable inner approximations of the SOS cone.

\pagebreak
%
%

\definecolor{sosfill}{RGB}{206,203,246}
\definecolor{sosline}{RGB}{83,74,183}
\definecolor{soncfill}{RGB}{159,225,203}
\definecolor{soncline}{RGB}{15,110,86}
\definecolor{prodfill}{RGB}{240,153,123}
\definecolor{prodline}{RGB}{153,60,29}
\definecolor{sdfill}{RGB}{211,209,199}
\definecolor{sdline}{RGB}{95,94,90}
\definecolor{ahifill}{RGB}{250,199,117}
\definecolor{ahiline}{RGB}{133,79,11}
 
\begin{figure}[htbp]
\centering
\begin{tikzpicture}[scale=1.0, every node/.style={font=\small}]
 
  \draw[draw=sdline, fill=sdline, fill opacity=0.06, line width=0.4pt]
        (0,0) ellipse [x radius=7.75cm, y radius=4.375cm];
  \node[font=\small\bfseries] at (0,3.75) {$\mathcal{P}_{n,2d}$ (non-negative)};
 
  \draw[draw=sosline, fill=sosfill, fill opacity=0.45, line width=0.6pt]
        (-1.75,-0.125) ellipse [x radius=5.0cm, y radius=3.375cm];
 
  \draw[draw=soncline, fill=soncfill, fill opacity=0.45, line width=0.6pt]
        (2.5,-0.125) ellipse [x radius=4.375cm, y radius=3.125cm];
 
  \draw[draw=prodline, fill=prodfill, fill opacity=0.55, line width=0.6pt]
        (-1.75,-1.05) ellipse [x radius=4.125cm, y radius=1.15cm];
 
  \draw[draw=sdline, fill=sdfill, fill opacity=0.6, line width=0.6pt]
        (0.8,-1.05) ellipse [x radius=2.05cm, y radius=1.2cm];
 
  \draw[draw=ahiline, fill=ahifill, fill opacity=0.9, line width=0.6pt]
        (0.8,-1.05) ellipse [x radius=1.05cm, y radius=0.65cm];
 
  \node[sosline]  at (-4.9, 1.9) {$\Sigma_{n,2d}$};
  \node[soncline] at ( 5.2, 1.9) {$\mathcal{C}_{n,2d}$};
  \node[sdline, font=\scriptsize] at (1.875,-0.45) {SDSOS};
  \node[ahiline, font=\small\bfseries] at (0.8,-1.05) {AHI};
  \node[prodline, font=\small\bfseries] at (-4.6,-1.05) {$\Pi_{\mathrm{AHI}}$};
 
  \fill (-2.75,-1.7) circle (2pt);
  \draw[dashed, gray, line width=0.3pt] (-2.75,-1.8) -- (-3.6,-4.9);
  \node[anchor=north, align=center, font=\scriptsize] at (-3.6,-4.9)
       {$H=p_1p_2$\\ SOS, not SONC, not SDSOS};
 
  \fill (5.2,0.375) circle (2pt);
  \draw[dashed, gray, line width=0.3pt] (5.2,0.275) -- (3.8,-4.9);
  \node[anchor=north, align=center, font=\scriptsize] at (3.8,-4.9)
       {Motzkin form\\ SONC, not SOS};
 
\end{tikzpicture}
\caption{The AHI family among the standard non-negativity certificates.
Every non-negative AHI polynomial is a sum of binomial squares, hence lies in
$\mathrm{SDSOS}\subseteq\Sigma_{n,2d}\cap\mathcal{C}_{n,2d}$
(Corollary~\ref{cor:ahi-both}), and the containment is strict
(Proposition~\ref{prop:proper}). Closing the family under multiplication gives
$\Pi_{\mathrm{AHI}}$, which remains inside $\Sigma_{n,2d}$
(Proposition~\ref{prop:pi-sos}) but leaves both $\mathcal{C}_{n,2d}$
(Theorem~\ref{thm:pi-not-sonc}) and $\mathrm{SDSOS}$
(Proposition~\ref{prop:pi-not-sdsos}), as witnessed by
$H=4(x_1^2-x_2^2)^2(x_3^2-x_4^2)^2$. The Motzkin form witnesses
$\mathcal{C}_{n,2d}\not\subseteq\Sigma_{n,2d}$
(Proposition~\ref{prop:neither-contains}). Regions are schematic and not to scale.}
\label{fig:cone-hierarchy}
\end{figure}
\section{Numerical experiments and optimization}\label{sec:numerics}

In this section we demonstrate the computational advantages of exploiting the AHI structure in polynomial optimization. We first compare standard (dense) SOS relaxation with the AHI-specific sparse relaxation, and then develop a factorized optimization hierarchy for the product cone $\PiAHI$.

\subsection{Experimental setup}\label{subsec:setup}
All numerical experiments were performed on a workstation with an Intel Core i7 processor (2.8 GHz) and 16 GB of RAM, running MATLAB R2023b. The optimization problems were modelled using YALMIP \cite{lofberg2004yalmip} and solved with the semidefinite programming solver SDPT3 \cite{toh1999sdpt3}.

For the \textbf{standard SOS} method we use the full monomial basis of degree up to $d$, giving a Gram matrix of size $\binom{n+d}{d} \times \binom{n+d}{d}$. For the \textbf{AHI method} we exploit the fact that AHI polynomials are composed solely of squared monomials: the Gram matrix is indexed by the half-support \eqref{eq:half-support}, of size $n(n-1)+1$, which is dramatically smaller than the full basis.

\subsection{Unconstrained minimization and scalability}\label{subsec:scalability}
We first consider unconstrained minimization of randomly generated AHI polynomials. To analyse scalability we vary the dimension $n$ and the degree $2d$, recording the number of decision variables in the resulting SDP ($N_{\mathrm{var}}$), the size of the positive semidefinite cone ($K_{\mathrm{size}}$, the dimension of the Gram matrix, equal to the number of basis monomials in the SOS decomposition), and the CPU time of the solver in seconds.

Table~\ref{tab:scalability} summarizes the results. The ``Speedup'' column is the ratio of dense solver time to AHI solver time. The symbol ``OOM'' indicates that the standard solver exceeded available memory and failed to construct the problem.

\begin{table}[h!]
\centering
\resizebox{\textwidth}{!}{%
\begin{tabular}{@{}cc|rrr|rrr|r@{}}
\toprule
\multicolumn{2}{c|}{\textbf{Problem}} & \multicolumn{3}{c|}{\textbf{Standard SOS (dense)}} & \multicolumn{3}{c|}{\textbf{AHI method (sparse)}} & \multicolumn{1}{c}{\textbf{Comparison}} \\ \midrule
$n$ & $2d$ & $N_{\mathrm{var}}$ & $K_{\mathrm{size}}$ & Time (s) & $N_{\mathrm{var}}$ & $K_{\mathrm{size}}$ & Time (s) & \textbf{Speedup} \\ \midrule
4 & 16 & 122{,}760 & 495 & 186.12 & 91 & 13 & 0.59 & \textbf{315x} \\
4 & 24 & -- & -- & OOM & 91 & 13 & 0.25 & $\infty$ \\
6 & 24 & -- & -- & OOM & 496 & 31 & 0.54 & $\infty$ \\
7 & 28 & -- & -- & OOM & 946 & 43 & 1.21 & $\infty$ \\
8 & 32 & -- & -- & OOM & 1{,}653 & 57 & 2.79 & $\infty$ \\
5 & 30 & -- & -- & OOM & 231 & 21 & 0.29 & $\infty$ \\
6 & 36 & -- & -- & OOM & 496 & 31 & 0.51 & $\infty$ \\
4 & 40 & -- & -- & OOM & 91 & 13 & 0.20 & $\infty$ \\
10 & 40 & -- & -- & OOM & 4{,}186 & 91 & 17.21 & $\infty$ \\ \bottomrule
\end{tabular}%
}
\caption{Comparison of standard SOS and AHI relaxations. Note that $K_{\mathrm{size}}$ in the AHI columns realizes the bound $n(n-1)+1$ of \eqref{eq:half-support}.}
\label{tab:scalability}
\end{table}

As shown in Table~\ref{tab:scalability}, the standard dense SOS relaxation becomes intractable almost immediately, failing with OOM for nearly all instances, whereas the AHI method solves every instance efficiently. Even for the smallest case $(n=4,\,2d=16)$ the standard method requires more than three minutes while the AHI method finishes in under a second. For all subsequent cases, including high-degree instances up to $2d=40$, the dense relaxation fails completely due to memory constraints while the AHI method remains stable and fast.

For the one solvable case $(n=4,\,2d=16)$, both the dense and the AHI sparse relaxation converged to the same optimal lower bound (to solver tolerance $10^{-8}$), confirming that the AHI method achieves its speedup without loss of precision or optimality.

\subsection{Visualizing the complexity gap}\label{subsec:gap}
Figure~\ref{fig:time_plot} plots computation time against problem instances sorted by increasing complexity, on a logarithmic vertical axis to accommodate the disparity in performance. The red curve represents the standard dense relaxation; its steep trajectory corresponds to the combinatorial explosion of the monomial basis size $\binom{n+d}{d}$, which quickly exhausts available memory. In contrast the AHI method (blue curve) exhibits gentle and stable growth: even for the largest instance considered $(n=10,\,2d=40)$ the computation time remains under 20 seconds. The AHI method therefore does not merely offer a constant-factor speedup but alters the effective complexity of the problem for this family.

\begin{figure}[htbp]
\centering
\includegraphics[width=0.85\textwidth]{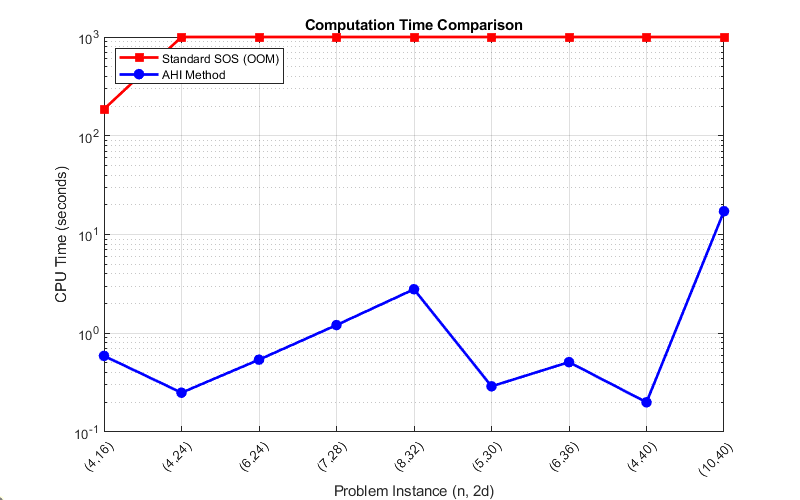}
\caption{Log-scale comparison of CPU time. The red line indicates the threshold where the standard method fails (out of memory); the blue line shows the actual performance of the AHI method.}
\label{fig:time_plot}
\end{figure}

\subsection{Constrained optimization examples}\label{subsec:constrained}

We now apply the method to constrained problems whose objective or constraints belong to the AHI family.

\begin{example}[Non-convex objective over a Euclidean ball]\label{ex:num1}
\[
\begin{aligned}
& \min_{x\in \mathbb{R}^{2}} \quad 3x_1^4 + 6x_1^4 x_2^2 + 3x_2^4 + 2x_1^2 x_2^4 + 6x_2^2 + 2x_1^2 - 18x_1^2 x_2^2\\
& \quad\text{s.t.}\quad  3 - x_1^2 - x_2^2 \geq 0 .
\end{aligned}
\]
The objective is a non-convex AHI polynomial. Solving the relaxation at order $k=2$ yields the global minimum $f^* = 0$ with optimizer $x^* \approx (0.6544,\,-0.0805)$.
\end{example}

\begin{example}\label{ex:num2}
\[
\begin{aligned}
   \min_{x\in\mathbb{R}^2} \quad& \big(x_1^2x_2^4 + 2x_1^4x_2^2\big)\big(2x_1^2x_2^4 + x_1^4x_2^2\big) - x_1^6x_2^6\\
   \text{s.t.}\quad &  - x_1^2 + x_2 \geq 0, \\
  & -x_1 - x_2 +(x_2 - x_1 + 0.65)^2 + 3.85 \geq 0,\\
  & 0\leq x_i\leq 5,\quad i=1,2 .
\end{aligned}
\]
The objective is a non-convex AHI polynomial and the feasible region is defined by AHI constraints. The Lasserre hierarchy converges at second order to $f^* \approx -0.0012$ at $x^*=(0.1481,\,0.1224)$.
\end{example}

\begin{example}[Higher-dimensional box constraints]\label{ex:num3}
\[
\begin{aligned}
& \min_{x\in \mathbb{R}^{4}}  \quad  \big(x_1^2 + x_2^2 + x_3^2 + x_4^2\big)\big(x_2^2x_3^2x_4^2 + x_1^2x_3^2x_4^2 + x_1^2x_2^2x_4^2 + x_1^2x_2^2x_3^2\big) - 7x_1^2x_2^2x_3^2x_4^2\\
& \quad\text{s.t.}\quad 1\leq x_{i}\leq 3,\quad i=1,2,3,4 .
\end{aligned}
\]
The sparse AHI formulation solves this in 0.8 seconds, yielding optimal value $9$ at $x^* = (1,1,1,1)$. The dense relaxation requires substantially more memory and time to process the fourth-order moment matrix in four variables.
\end{example}

\subsection{A factorized optimization hierarchy for \texorpdfstring{$\PiAHI$}{Pi AHI}}\label{subsec:factorized-hierarchy}

Let $f = \prod_{l=1}^{k} f_l$ be a product of AHI polynomials, i.e.\ a generator of $\PiAHI$ (Definition~\ref{def:piahi}), and let $K \subseteq \mathbb{R}^{n}$ be a basic closed semialgebraic set. Consider
\begin{equation}\label{eq:factorized-obj}
f^{*}_{K} \;=\; \inf_{x \in K}\ \prod_{l=1}^{k} f_l(x).
\end{equation}
Each factor $f_l$ carries the explicit sum-of-binomial-squares certificate \eqref{eq:ahi-sonc-decomp} and the small half-support \eqref{eq:half-support}. The purpose of this subsection is to convert that per-factor structure into a method for the \emph{product} \eqref{eq:factorized-obj} that avoids forming the dense relaxation of degree $\sum_l \deg f_l$.

\subsubsection{Level A: closed-form factored certificate}\label{subsubsec:levelA}

Multiplying the decompositions of the factors yields an SOS representation of $f$ with no semidefinite program at all. Concretely, if $f_l=\sum_{a}\sigma_{l,a}^2$, then
\begin{equation}\label{eq:product-of-squares}
f \;=\; \prod_{l=1}^k \Big(\sum_a \sigma_{l,a}^2\Big) \;=\; \sum_{a_1,\dots,a_k}\Big(\prod_{l=1}^k \sigma_{l,a_l}\Big)^{2},
\end{equation}
a sum of $\prod_l\big(\binom{n_l}{2}+1\big)$ explicit squares. Writing $B_l$ for the half-support of $f_l$, a Gram basis for $f$ is contained in the Minkowski sum $B = B_1 + \cdots + B_k$, so by \eqref{eq:half-support}
\begin{equation}\label{eq:basis-size}
|B| \;\le\; \prod_{l=1}^{k}\big(n_l(n_l-1)+1\big),
\end{equation}
in contrast with the dense half-degree basis of size $\binom{n+D}{D}$, where $D = \tfrac12\sum_l \deg f_l$.

\begin{example}\label{ex:levelA}
For $f_1 = 2(x_1^2-x_2^2)^2$ and $f_2 = 2(x_3^2-x_4^2)^2$ we have $B_1=\{x_1^2,x_2^2\}$ and $B_2=\{x_3^2,x_4^2\}$, each of size two, and
\[
f_1f_2 = \Big(2\big(x_1^2x_3^2 - x_1^2x_4^2 - x_2^2x_3^2 + x_2^2x_4^2\big)\Big)^{2},
\]
a single square on the basis $B_1+B_2=\{x_1^2x_3^2,\,x_1^2x_4^2,\,x_2^2x_3^2,\,x_2^2x_4^2\}$ of size $4$, against a dense degree-eight basis in four variables of size $\binom{8}{4}=70$.
\end{example}

\subsubsection{Level B: factorized lower bound}\label{subsubsec:levelB}

If every factor is non-negative on $K$ --- decidable in closed form by Theorems~\ref{thm:sufficient} and~\ref{thm:necessary} --- then from $f_l(x) \ge \inf_{K} f_l \ge 0$ for all $l$ we obtain
\begin{equation}\label{eq:factorized-bound}
\inf_{x\in K}\ \prod_{l=1}^{k} f_l(x)
\;\ge\;
\prod_{l=1}^{k}\Big(\inf_{x\in K} f_l(x)\Big).
\end{equation}
The right-hand side decouples \eqref{eq:factorized-obj} into $k$ independent AHI subproblems, each solvable by the sparse relaxation of Section~\ref{subsec:scalability} or by geometric programming \citep{iliman2016lower}, at cost governed by the small per-factor size $|B_l|$ rather than by $|B|$ or $\binom{n+D}{D}$.

\begin{proposition}\label{prop:factorized-exact}
Suppose $f_1,\dots,f_k$ are non-negative on $K$ and attain their minima over $K$ at a common point $x^{*}\in K$, i.e.\ $f_l(x^{*}) = \inf_K f_l$ for every $l$. Then \eqref{eq:factorized-bound} holds with equality, and $f^{*}_{K} = \prod_l (\inf_K f_l)$ is certified by solving the $k$ subproblems alone.
\end{proposition}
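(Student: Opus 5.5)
The plan is a two-sided sandwich argument. The lower bound \eqref{eq:factorized-bound} is already available; the only new ingredient is a matching upper bound, obtained by evaluating $f$ at the common minimizer $x^{*}$. Nothing about the AHI structure is needed beyond the non-negativity hypothesis and the fact that each factor's infimum is attained.

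First, I re-derive the lower bound so that it is clear where non-negativity enters. For every $x\in K$ and every $l$ we have $f_l(x)\ge m_l:=\inf_K f_l\ge 0$. Products of non-negative reals are monotone in each factor, so an induction on $k$ gives $\prod_l f_l(x)\ge \prod_l m_l$. For the inductive step, write $\prod_{l\le k} f_l = \big(\prod_{l<k} f_l\big) f_k$ and use that both pieces are $\ge$ their non-negative lower bounds. Taking the infimum over $x\in K$ yields $f^{*}_K\ge \prod_l m_l$.

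Second, the upper bound is immediate from $x^{*}\in K$:
\[
f^{*}_K=\inf_{x\in K}\prod_{l=1}^k f_l(x)\;\le\;\prod_{l=1}^k f_l(x^{*})=\prod_{l=1}^k m_l .
\]
Combining the two inequalities gives equality in \eqref{eq:factorized-bound}. It also shows that $x^{*}$ attains $f^{*}_K$, so the infimum of the product is a minimum.

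Third, for the certification claim, note that each $m_l$ is the optimal value of the $l$-th AHI subproblem. That value is obtained by the sparse relaxation over the half-support $B_l$ of \eqref{eq:half-support}. The product of these values equals $f^{*}_K$ by the previous step, so no relaxation of $f$ itself is required.

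The main obstacle is the precise meaning of ``certified by the subproblems alone.'' If a relaxation returns only a lower bound $\ell_l\le m_l$, I would additionally exhibit a feasible point with $f_l(x^{*})=\ell_l$ for all $l$, so that each $\ell_l$ is certified exact. The sandwich argument above then applies with $\ell_l$ in place of $m_l$. Exhibiting the common point $x^{*}$ is exactly the hypothesis of the proposition, so I would state this caveat explicitly rather than claim more. Non-negativity is essential: without it, multiplying inequalities can reverse them, for instance when two factors are both negative.
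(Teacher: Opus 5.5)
Your proof is correct and is the same sandwich argument the paper uses. The lower bound comes from \eqref{eq:factorized-bound}, which you re-derive by monotonicity of products of non-negative reals where the paper just cites it, and the upper bound comes from evaluating $f$ at the common minimizer $x^{*}$. Your caveat about relaxations that return only lower bounds $\ell_l \le m_l$ is a sensible sharpening of the paper's informal phrase ``certified by solving the $k$ subproblems alone,'' which the paper does not spell out.
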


\begin{proof}
By \eqref{eq:factorized-bound}, $f^{*}_K \ge \prod_l(\inf_K f_l)$. Conversely, evaluating at the common minimizer, $f(x^{*}) = \prod_l f_l(x^{*}) = \prod_l (\inf_K f_l) \ge f^{*}_K$. The two inequalities coincide.
\end{proof}

\begin{remark}\label{rem:common-min}
The common-minimizer hypothesis holds automatically for AHI factors on a positive box $K=[a,b]^{n}$ with $a>0$: every monomial of a non-negative AHI polynomial has non-negative exponents, so on the positive orthant each factor is minimized at the corner $(a,\dots,a)$ simultaneously. When the factors are minimized at different points --- for instance when the weights $\lambda_i$ pull an optimizer off the diagonal, or when $K$ is not a positive box --- the bound \eqref{eq:factorized-bound} is in general strict and one passes to Level~C.
\end{remark}

\subsubsection{Level C: multiplier hierarchy for overlapping factors}\label{subsubsec:levelC}

When the factors share variables and no common minimizer exists, \eqref{eq:factorized-bound} may be loose. In that case we retain the factors as building blocks but allow them to be weighted by polynomial multipliers. It is convenient to work with the \emph{shifted} factors
\begin{equation}\label{eq:shifted-factors}
h_l \;=\; f_l - \inf_{K} f_l \;\ge\; 0 \ \text{ on } K,
\qquad \inf_{K} h_l = 0,
\end{equation}
whose minima are already available from the $k$ cheap subproblems of Level~B. We then seek the largest $\gamma$ admitting a certificate
\begin{equation}\label{eq:multiplier-hierarchy}
f - \gamma
\;=\;
\sum_{S \subseteq \{1,\dots,k\}} \sigma_{S}\prod_{l\in S} h_l,
\qquad \sigma_{S}\ \text{SOS with } \deg \sigma_S \le d,
\end{equation}
where the empty product equals $1$. Denote by $\gamma_d$ the optimal value. Since the $h_l$ are pre-certified non-negative on $K$, every feasible point of \eqref{eq:multiplier-hierarchy} yields a valid lower bound $\gamma_d \le f^{*}_K$, and increasing $d$ enlarges the feasible set, so $\gamma_0 \le \gamma_2 \le \gamma_4 \le \cdots \le f^{*}_{K}$.

\begin{proposition}\label{prop:level-c-base}
At $d=0$ (constant multipliers) the certificate \eqref{eq:multiplier-hierarchy} recovers the factorized bound \eqref{eq:factorized-bound}, i.e.\ $\gamma_0 = \prod_{l}(\inf_K f_l)$.
\end{proposition}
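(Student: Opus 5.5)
The plan is to prove the two inequalities $\gamma_0\ge\prod_l(\inf_K f_l)$ and $\gamma_0\le\prod_l(\inf_K f_l)$ separately. Throughout I write $m_l=\inf_K f_l\ge 0$, so that $h_l=f_l-m_l$ as in \eqref{eq:shifted-factors}. At $d=0$ each multiplier $\sigma_S$ is an SOS polynomial of degree $0$, i.e.\ a constant $c_S\ge 0$. The certificate \eqref{eq:multiplier-hierarchy} is then a polynomial identity
\[
f-\gamma=\sum_{S\subseteq\{1,\dots,k\}} c_S\prod_{l\in S}h_l,\qquad c_S\ge 0 .
\]

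For the lower bound $\gamma_0\ge\prod_l m_l$, I substitute $f_l=h_l+m_l$ into $f=\prod_l f_l$ and expand multilinearly:
\[
f=\prod_{l=1}^k(h_l+m_l)=\sum_{S\subseteq\{1,\dots,k\}}\Big(\prod_{l\notin S}m_l\Big)\prod_{l\in S}h_l .
\]
The $S=\varnothing$ term is the constant $\prod_l m_l$. Moving it to the left gives a certificate with $\gamma=\prod_l m_l$, $c_\varnothing=0$, and $c_S=\prod_{l\notin S}m_l\ge 0$ for $S\neq\varnothing$. These coefficients are admissible precisely because each factor is non-negative on $K$, which is the Level~B hypothesis. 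So $\gamma=\prod_l m_l$ is feasible and $\gamma_0\ge\prod_l m_l$.

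For the reverse inequality, take any feasible $(\gamma,(c_S))$ and subtract the canonical expansion above. This yields
\[
\prod_l m_l-\gamma-c_\varnothing=\sum_{S\neq\varnothing}\big(c_S-\textstyle\prod_{l\notin S}m_l\big)\prod_{l\in S}h_l .
\]
I would argue in one of two ways. The first route uses a common zero: if there is a point $x^*\in K$ with $h_l(x^*)=0$ for all $l$, evaluating the original certificate at $x^*$ gives $f(x^*)-\gamma=c_\varnothing\ge 0$. Hence $\gamma\le f(x^*)=\prod_l m_l$. The second route covers the general case. Here I would show that the $2^k$ square-free products $\{\prod_{l\in S}h_l\}_S$, including the constant $1$, are linearly independent. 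It suffices that the $h_l$ are algebraically independent nonconstant polynomials, which holds, for example, when the factors involve disjoint variable blocks. In that case every coefficient in the displayed identity vanishes, so $c_\varnothing=\prod_l m_l-\gamma$, and $c_\varnothing\ge 0$ forces $\gamma\le\prod_l m_l$.

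I expect this upper bound to be the main obstacle. If the $h_l$ are linearly dependent in a way that lets positive combinations of nonconstant products absorb a positive constant, then constant multipliers could in principle certify a larger $\gamma$. This is not implausible, since $f^*_K$ may strictly exceed $\prod_l m_l$ when the factors have no common minimizer. I would therefore first try the linear-independence argument, checking that algebraic independence of the $h_l$ suffices via the multilinear monomials of a polynomial ring. If that fails in general, I would state the proposition under that independence (or common-zero) hypothesis, noting that it covers the cases of interest, such as the example in Example~\ref{ex:levelA}.
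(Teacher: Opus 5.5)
Your lower bound is exactly the paper's argument: expand $\prod_l(h_l+m_l)$ and read off $\gamma=\prod_l m_l$, $\sigma_\varnothing=0$, $\sigma_S=\prod_{l\notin S}m_l$. For the converse, the paper uses your first route: it evaluates the certificate where all $h_l$ vanish simultaneously, adding only ``or taking an infimizing sequence''. That phrase helps only if one sequence drives every $h_l$ to $0$ at once, which is again a common minimizer.

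Your suspicion that the converse fails in general is correct. Without such a hypothesis the proposition is false, even for AHI factors on a box. Take
\[
f_1=(x_1^2-x_2^2)^2,\qquad f_2=(x_1^2+x_2^2)^2,\qquad f_3=(x_1^4-x_2^4)^2=f_1f_2,
\]
and let $K=[1,2]\times[3,4]$. These are the $n=2$, $\lambda_1=\lambda_2=1$ cases of \eqref{eq:family}, with $(x^{\alpha(1)},x^{\alpha(2)},c)$ equal to $(x_1^2,x_2^2,4)$, $(x_1^2,x_2^2,0)$ and $(x_1^4,x_2^4,4)$. Then $m_1=25$, $m_2=100$ and $m_3=65^2=4225$, attained at $(2,3)$, $(1,3)$ and $(2,3)$ respectively. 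Moreover,
\[
f-m_3^2=(f_3-m_3)(f_3+m_3)=h_3\bigl((h_1+m_1)(h_2+m_2)+m_3\bigr)=h_1h_2h_3+m_2h_1h_3+m_1h_2h_3+(m_1m_2+m_3)h_3
\]
is a $d=0$ certificate with non-negative constants. Since always $\gamma_0\le f^*_K=m_3^2$, we get $\gamma_0=m_3^2>m_1m_2m_3$, because $m_3=4225>2500=m_1m_2$.

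The mechanism is the one you anticipated. Setting $h_1=h_2=0$ forces $f_3=m_1m_2\neq m_3$, so the $h_l$ have no common zero, even in $\mathbb{C}^2$. Consequently $1=(h_1h_2+m_2h_1+m_1h_2-h_3)/(m_3-m_1m_2)$ lies in the span of the non-constant products.

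So the gap lies in the statement and in the paper's proof, not in your reasoning, and adding a hypothesis is the right repair. Two refinements:

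\begin{enumerate}
\item In your first route the common zero need not lie in $K$. The certificate is a polynomial identity, so any common zero in $\mathbb{C}^n$ gives $\gamma=\prod_l m_l-\sigma_\varnothing$.
\item Your two routes are incomparable. Algebraically independent $h_l$ need not share a zero, and a shared zero does not exclude relations such as $h_3=h_1+h_2$. Both are instances of the single condition $1\notin\operatorname{span}_{\mathbb{R}}\{\prod_{l\in S}h_l:S\neq\varnothing\}$. Under it, subtracting the canonical expansion from any certificate forces $\gamma=\prod_l m_l-\sigma_\varnothing\le\prod_l m_l$.
\end{enumerate}

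When all $m_l>0$ the condition is also necessary. If $1=\sum_{S\neq\varnothing}a_S\prod_{l\in S}h_l$, then
\[
f-\Bigl(\prod_l m_l+\delta\Bigr)=\sum_{S\neq\varnothing}\Bigl(\prod_{l\notin S}m_l-\delta a_S\Bigr)\prod_{l\in S}h_l
\]
has positive multipliers for small $\delta>0$, so $\gamma_0>\prod_l m_l$.
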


\begin{proof}
Write $m_l = \inf_K f_l$, so $f_l = h_l + m_l$. Expanding,
\[
f \;=\; \prod_{l} (h_l + m_l) \;=\; \prod_l m_l \;+\!\!\sum_{\varnothing\neq S\subseteq\{1,\dots,k\}}\!\!\Big(\prod_{l\notin S} m_l\Big)\prod_{l\in S} h_l ,
\]
which is exactly \eqref{eq:multiplier-hierarchy} with $\gamma=\prod_l m_l$, $\sigma_{\varnothing}=0$ and the non-negative constants $\sigma_S = \prod_{l\notin S} m_l$ for $S\neq\varnothing$. Hence $\gamma_0 \ge \prod_l m_l$. Conversely, for constant multipliers the right-hand side of \eqref{eq:multiplier-hierarchy} is a non-negative combination of the $h_l$-products, and evaluating at any point where all $h_l$ vanish simultaneously (or taking an infimizing sequence) gives $\gamma\le\prod_l m_l$.
\end{proof}

\begin{remark}\label{rem:shift-needed}
Using the shifted factors \eqref{eq:shifted-factors} rather than the raw $f_l$ is essential for Proposition~\ref{prop:level-c-base}. With the unshifted factors, matching the top-degree terms forces $\sigma_{\{1,\dots,k\}}=1$ and then $\sigma_{\varnothing} = -\gamma$, which is SOS only for $\gamma\le 0$; the resulting $d=0$ bound would be worse than \eqref{eq:factorized-bound}. With the shift, $\gamma_d$ interpolates between the free factorized bound at $d=0$ and a dense relaxation for large $d$, while the products $\prod_{l \in S} h_l$ need never be expanded into the dense monomial basis.
\end{remark}


A detailed analysis of \eqref{eq:multiplier-hierarchy} for AHI factors --- in particular conditions under which $\gamma_d \to f^{*}_K$, the degree at which finite convergence occurs, and the relation to the constrained SONC and geometric programming framework of \citet{dressler2019approach} --- is left for future work. We stress that convergence does not follow from the construction alone and would require an Archimedean-type condition on the module generated by the $h_l$.

\subsubsection{Comparison with generic sparsity}\label{subsubsec:sparsity}

The decoupling in \eqref{eq:factorized-bound} is \emph{not} recovered by correlative sparsity \citep{waki2006sums}: expanding a product $\prod_l f_l$ into monomials couples variables across factors, since a monomial of $f$ generically contains variables from several $f_l$, so the correlative sparsity graph becomes a single clique and no decomposition is detected. The sparsity of $\PiAHI$ is \emph{multiplicative} rather than additive. Term sparsity \citep{wang2021tssos} does apply --- every generator of $\PiAHI$ has only even exponents, hence the maximal sign-symmetry group $(\mathbb{Z}/2)^{n}$, which TSSOS exploits --- but term sparsity operates on the expanded polynomial and likewise does not exploit the factor structure. The factorized hierarchy above is therefore complementary to, not subsumed by, existing sparsity techniques.

\subsubsection{Worked example}\label{subsubsec:worked}

Let
\[
\begin{aligned}
f_1 &= 2x_2^2x_3^2x_4^2(x_1^4+x_5^4) + 2x_1^2x_3^2x_4^2(x_2^4+x_5^4)
     + 2x_1^2x_2^2x_4^2(x_3^4+x_5^4) + 2x_1^2x_2^2x_3^2(x_4^4+x_5^4)\\
    &\quad + x_3^2x_4^2x_5^2(x_1^4+x_2^4) + x_2^2x_4^2x_5^2(x_1^4+x_3^4)
     + x_2^2x_3^2x_5^2(x_1^4+x_4^4) + x_1^2x_4^2x_5^2(x_2^4+x_3^4)\\
    &\quad + x_1^2x_3^2x_5^2(x_2^4+x_4^4) + x_1^2x_2^2x_5^2(x_3^4+x_4^4)
     - 4x_1^2x_2^2x_3^2x_4^2x_5^2,
\end{aligned}
\]
\[
\begin{aligned}
f_2 &= x_3^2x_4^2(x_1^4+x_2^4) + x_2^2x_4^2(x_1^4+x_3^4)
     + x_2^2x_3^2(x_1^4+x_4^4) + x_1^2x_4^2(x_2^4+x_3^4)\\
    &\quad + x_1^2x_3^2(x_2^4+x_4^4) + x_1^2x_2^2(x_3^4+x_4^4)
     - 3x_1^2x_2^2x_3^2x_4^2.
\end{aligned}
\]
Both are AHI polynomials in the sense of \eqref{eq:family} with $\alpha(i)=2e_i$: for $f_2$ we read off $\lambda_1=\dots=\lambda_4=1$ and $c=7$, so $\sum_i\lambda_i^2-c = 4-7 = -3$; for $f_1$ we read off $\lambda_1=\dots=\lambda_4=1$, $\lambda_5=2$ and $c=12$, so $\sum_i\lambda_i^2-c = 8-12 = -4$. Both satisfy the threshold of Theorem~\ref{thm:sufficient} (namely $-3\ge -12$ and $-4\ge -36$), so both factors are non-negative and
\[
f = f_1 f_2 \in \PiAHI
\]
is a degree-$18$ form in five variables whose factors \emph{share} the four variables $x_1,\dots,x_4$.

Minimize $f$ over the box $K = [1,3]^5$. By Remark~\ref{rem:common-min} each factor is minimized at the common corner $x^{*}=(1,1,1,1,1)$, with
\[
\inf_K f_1 = f_1(x^{*}) = 24, \qquad \inf_K f_2 = f_2(x^{*}) = 9 .
\]
By Proposition~\ref{prop:factorized-exact} the factorized bound is tight, so
\[
f^{*}_{K} \;=\; (\inf_K f_1)(\inf_K f_2) \;=\; 24 \cdot 9 \;=\; 216 ,
\]
certified by two AHI subproblems of Gram size $|B_1| = 5\cdot4+1 = 21$ and $|B_2| = 4\cdot3+1 = 13$. Note that the overlap of variables does not obstruct exactness here, because the AHI symmetry aligns both minimizers at the same corner. A dense relaxation of the same problem would require the degree-$9$ monomial basis in five variables, of size $\binom{14}{5} = 2002$, against $\prod_l|B_l| = 21\cdot 13 = 273$ for the factored basis \eqref{eq:basis-size}.

\subsection{Summary of the numerical results}\label{subsec:summary}
The experiments confirm that the algebraic structure of the AHI family admits highly efficient SDP representations. Reducing the Gram matrix from a combinatorial function of the degree to the linear-in-$n^2$ size \eqref{eq:half-support} allows problems up to degree 40 and dimension 10 to be solved on standard hardware, and the method integrates into the Lasserre hierarchy for constrained optimization, often yielding global minima at low relaxation orders ($k=2$). For products of AHI polynomials, the factorized hierarchy of Section~\ref{subsec:factorized-hierarchy} replaces a single large relaxation by several small ones, with exactness guaranteed whenever the factors share a minimizer.

\section{Conclusions}\label{sec:conclusions}

In this paper we introduced a new family of non-negative polynomials, the AHI polynomials, constructed with the help of the arithmetic mean--harmonic mean inequality. We investigated the algebraic and geometric properties of this family, focusing on its relationship with sum-of-squares polynomials and on convexity.

A key theoretical contribution is the proof that, for the AHI family, the set of non-negative polynomials coincides exactly with the set of SOS polynomials (Theorems~\ref{thm:sufficient} and~\ref{thm:necessary}). This is significant, as the property generally fails for arbitrary polynomials of degree at least four in more than two variables, and it constitutes an addendum to Hilbert's 17th problem.

We then examined convexity, showing that AHI polynomials are in general non-convex (Theorems~\ref{thm:nonconvex-2var} and~\ref{thm:nonconvex-3var}) but that for specific monomial structures convexity implies SOS-convexity (Theorem~\ref{thm:sos-convex}), providing a tractable algebraic certificate in those cases.

Section~\ref{sec:cones} located the family among the standard certificates: every AHI polynomial is both SOS and SONC, and the containment in the intersection is strict, the obstruction being the rank-one rigidity \eqref{eq:coeff-rigidity} of the coefficient matrix. Closing the family under multiplication yields the cone $\PiAHI$, which is SOS with closed-form certificates but escapes both SONC (Theorem~\ref{thm:pi-not-sonc}) and SDSOS (Proposition~\ref{prop:pi-not-sdsos}). This gives an explicit, AM-HM-generated family of certificates lying outside the two standard tractable inner approximations of the SOS cone.

Finally, we leveraged the sparse structure of AHI polynomials in optimization. Numerical experiments confirm substantial computational advantages within the Lasserre hierarchy, and the factorized hierarchy of Section~\ref{subsec:factorized-hierarchy} exploits the multiplicative structure of $\PiAHI$ in a way that correlative and term sparsity do not. Future work may address the convergence of the multiplier hierarchy \eqref{eq:multiplier-hierarchy}, a characterization of $\PiAHI\cap\SONC_{n,2d}$, and extensions of the construction to other classical inequalities.

\bibliographystyle{apalike}
\bibliography{reference}

\pagebreak
\appendix

\section{Explicit SOS decompositions of AHI polynomials}\label{app:sos-table}

\begin{table}[h!]
    \centering
    \renewcommand{\arraystretch}{2.2}
    \begin{tabular}{|p{7cm}|p{7cm}|}
    \hline
     \textbf{AHI polynomial}  & \textbf{SOS representation} \\
    \hline

    $2(x_1^4 + x_2^4) - 4x_1^2 x_2^2$
    &
    $2(x_1^2 - x_2^2)^2$
    \\ \hline

    $(x_1^4+x_2^4)x_3^2 + 2(x_1^4+x_3^4)x_2^2 + 2(x_2^4+x_3^4)x_1^2 - 10x_1^2 x_2^2 x_3^2$
    &
    $(x_1^2-x_2^2)^2 x_3^2 + 2(x_1^2-x_3^2)^2 x_2^2 + 2(x_2^2-x_3^2)^2 x_1^2$
    \\ \hline

    $
    \begin{aligned}
    & (x_1^4x_2^4 + x_2^4x_3^4)x_3^2
     + 2(x_1^4x_2^4 + x_3^4)x_2^2x_3^2 \\
    & + 0.5(x_2^4x_3^4 + x_3^4)x_1^2x_2^2
     - 3.75\, x_1^2 x_2^4 x_3^4
    \end{aligned}
    $
    &
    $
    \begin{aligned}
    & (x_1^2x_2^2 - x_2^2x_3^2)^2 x_3^2
     + 2(x_1^2x_2^2 - x_3^2)^2 x_2^2x_3^2 \\
    & + 0.5(x_2^2x_3^2 - x_3^2)^2 x_1^2x_2^2
     + 3.25\, x_1^2 x_2^4 x_3^4
    \end{aligned}
    $
    \\ \hline

    $6(x_1^4 x_3^4 + x_2^8 x_4^4) - 2 x_1^2 x_2^4 x_3^2 x_4^2$
    &
    $6(x_1^2 x_3^2 - x_2^4 x_4^2)^2 + 10\, x_1^2 x_2^4 x_3^2 x_4^2$
    \\ \hline

    \begin{minipage}{7cm}
\vspace{5pt}
$x_3^2 x_4^2 (x_1^4 + x_2^4) + x_2^2 x_4^2 (x_1^4 + x_3^4)$ \\
$+ x_2^2 x_3^2 (x_1^4 + x_4^4) + x_1^2 x_4^2 (x_2^4 + x_3^4)$ \\
$+ x_1^2 x_3^2 (x_2^4 + x_4^4) + x_1^2 x_2^2 (x_3^4 + x_4^4)$ \\
$- 3 x_1^2 x_2^2 x_3^2 x_4^2$
\vspace{5pt}
\end{minipage} &
\begin{minipage}{7cm}
\vspace{5pt}
$x_3^2 x_4^2 (x_1^2 - x_2^2)^2 +
x_2^2 x_4^2 (x_1^2 - x_3^2)^2 +$ \\
$ x_2^2 x_3^2 (x_1^2 - x_4^2)^2 +
x_1^2 x_4^2 (x_2^2 - x_3^2)^2 +$ \\
$ x_1^2 x_3^2 (x_2^2 - x_4^2)^2 +
x_1^2 x_2^2 (x_3^2 - x_4^2)^2$ \\
$+\,9 x_1^2 x_2^2 x_3^2 x_4^2$
\vspace{5pt}
\end{minipage} \\ \hline

    \begin{minipage}{7cm}
\vspace{5pt}
$2 x_2^2 x_3^2 x_4^2 (x_1^4 + x_5^4) + 2 x_1^2 x_3^2 x_4^2 (x_2^4 + x_5^4)$ \\
$+ 2 x_1^2 x_2^2 x_4^2 (x_3^4 + x_5^4) + 2 x_1^2 x_2^2 x_3^2 (x_4^4 + x_5^4)$ \\
$+ x_3^2 x_4^2 x_5^2 (x_1^4 + x_2^4) + x_2^2 x_4^2 x_5^2 (x_1^4 + x_3^4)$ \\
$+ x_2^2 x_3^2 x_5^2 (x_1^4 + x_4^4) + x_1^2 x_4^2 x_5^2 (x_2^4 + x_3^4)$ \\
$+ x_1^2 x_3^2 x_5^2 (x_2^4 + x_4^4) + x_1^2 x_2^2 x_5^2 (x_3^4 + x_4^4)$ \\
$- 4 x_1^2 x_2^2 x_3^2 x_4^2 x_5^2$
\vspace{5pt}
\end{minipage} &
\begin{minipage}{7cm}
\vspace{5pt}
$2 x_2^2 x_3^2 x_4^2 (x_1^2 - x_5^2)^2 +
2 x_1^2 x_3^2 x_4^2 (x_2^2 - x_5^2)^2 +$ \\
$ 2 x_1^2 x_2^2 x_4^2 (x_3^2 - x_5^2)^2 +
2 x_1^2 x_2^2 x_3^2 (x_4^2 - x_5^2)^2 +$ \\
$ x_3^2 x_4^2 x_5^2 (x_1^2 - x_2^2)^2 +
x_2^2 x_4^2 x_5^2 (x_1^2 - x_3^2)^2 +$ \\
$ x_2^2 x_3^2 x_5^2 (x_1^2 - x_4^2)^2 +
x_1^2 x_4^2 x_5^2 (x_2^2 - x_3^2)^2 +$ \\
$ x_1^2 x_3^2 x_5^2 (x_2^2 - x_4^2)^2 +
x_1^2 x_2^2 x_5^2 (x_3^2 - x_4^2)^2$ \\
$+\,24 x_1^2 x_2^2 x_3^2 x_4^2 x_5^2$
\vspace{5pt}
\end{minipage} \\ \hline

     \end{tabular}
      \caption{SOS decompositions of AHI polynomials. The last two rows are the factors $f_2$ and $f_1$ used in the worked example of Section~\ref{subsubsec:worked}.}
    \label{tab:sos_cases}
\end{table}

\clearpage

\section{Hessian computation for Theorem \texorpdfstring{\ref{thm:nonconvex-2var}}{4.1}}\label{app:hessian}

\begin{proof}[Addendum to the proof of Theorem~\ref{thm:nonconvex-2var}]
Consider the structure
\begin{equation}\label{eq:nonconvex-form}
f(x)=\lambda_1\lambda_2\big(x_1^{\alpha_1}x_2^{\beta_1} + x_1^{\alpha_2}x_2^{\beta_2}\big) + (\lambda_1^2 + \lambda_2^2 - c)\,x_1^{\alpha_1 + \alpha_2}x_2^{\beta_1 + \beta_2},
\end{equation}
where $\lambda_1,\lambda_2\geq 0$ and $\alpha_i,\beta_i\geq2$ are even. If $f$ is convex then all principal minors of the Hessian $H(x)$ are non-negative.

Consider $H_{11}$. If $\lambda_1^2 + \lambda_2^2 - c$ is positive then all terms are positive and $H_{11}$ is SOS. If $\lambda_1^2 + \lambda_2^2 - c$ is negative, then $H_{11}$ is non-negative if and only if it is SOS, which requires
\[
\begin{aligned}
& (\lambda_1^2 + \lambda_2^2 - c)(\alpha_1 + \alpha_2)(\alpha_1 + \alpha_2 - 1)\\
= \quad& 2\sqrt{\lambda_1\lambda_2(2\alpha_1)(2\alpha_1 - 1)}\sqrt{\lambda_1\lambda_2(2\alpha_2)(2\alpha_2 - 1)}\\
= \quad& 2\lambda_1\lambda_2\sqrt{(2\alpha_1)(2\alpha_1 - 1)(2\alpha_2)(2\alpha_2 - 1)} ,
\end{aligned}
\]
a contradiction, since the left-hand side is negative while the right-hand side is positive. The same holds for $H_{22}$. Hence the first necessary condition for convexity of $f$ is
\[
\lambda_1^2 + \lambda_2^2 - c > 0 .
\]

We now compute $H_{11}H_{22} - H_{12}^2$:
\[
\begin{aligned}
   & H_{11}H_{22} - H_{12}^2  \\& =\lambda_1^2\lambda_2^2 (4\alpha_1\beta_1)(2\alpha_1 - 1)(2\beta_1 - 1)x_1^{4\alpha_1 - 2}x_2^{4\beta_1 - 2} \\
    & + \lambda_1^2\lambda_2^2 (4\alpha_2\beta_1)(2\alpha_2 - 1)(2\beta_1 - 1)x_1^{2\alpha_1 + 2\alpha_2 - 2}x_2^{2\beta_1 + 2\beta_2 - 2} \\
    & + \lambda_1\lambda_2(\lambda_1^2 + \lambda_2^2 - c)(\alpha_1 + \alpha_2)(\alpha_1 + \alpha_2 -1)(2\beta_1)(2\beta_1 - 1)x_1^{3\alpha_1 + \alpha_2 - 2}x_2^{3\beta_1 + \beta_2 - 2} \\
    & + \lambda_1^2\lambda_2^2 (4\alpha_1\beta_2)(2\alpha_1 - 1)(2\beta_2 - 1)x_1^{2\alpha_1 + 2\alpha_2 - 2}x_2^{2\beta_1 + 2\beta_2 - 2} \\
    & + \lambda_1^2\lambda_2^2 (4\alpha_2\beta_2)(2\alpha_2 - 1)(2\beta_2 - 1)x_1^{4\alpha_2 - 2}x_2^{4\beta_2 - 2} \\
    & + \lambda_1\lambda_2(\lambda_1^2 + \lambda_2^2 - c)(\alpha_1 + \alpha_2)(\alpha_1 + \alpha_2 -1)(2\beta_2)(2\beta_2 - 1)x_1^{\alpha_1 + 3\alpha_2 - 2}x_2^{\beta_1 + 3\beta_2 - 2} \\
    & + \lambda_1\lambda_2(\lambda_1^2 + \lambda_2^2 - c)(2\alpha_1)(2\alpha_1 - 1)(\beta_1 + \beta_2)(\beta_1 + \beta_2 - 1)x_1^{3\alpha_1 + \alpha_2 -2}x_2^{3\beta_1 + \beta_2 -2}\\
    & + \lambda_1\lambda_2(\lambda_1^2 + \lambda_2^2 - c)(2\alpha_2)(2\alpha_2 - 1)(\beta_1 + \beta_2)(\beta_1 + \beta_2 - 1)x_1^{\alpha_1 + 3\alpha_2 -2}x_2^{\beta_1 + 3\beta_2 -2}\\
    & + (\lambda_1^2 + \lambda_2^2 - c)^2 (\alpha_1 + \alpha_2)(\alpha_1 + \alpha_2 -1)(\beta_1 + \beta_2)(\beta_1 + \beta_2 - 1)x_1^{2\alpha_1 + 2\alpha_2 -2}x_2^{2\beta_1 + 2\beta_2 -2}\\
    & -\lambda_1^2\lambda_2^2 (16\alpha_1^2\beta_1^2)x_1^{4\alpha_1 -2}x_2^{4\beta_1 -2}  -\lambda_1^2\lambda_2^2 (16\alpha_2^2\beta_2^2)x_1^{4\alpha_2 -2}x_2^{4\beta_2 -2}\\
    & -2\lambda_1^2\lambda_2^2 (4\alpha_1\beta_1)(4\alpha_2\beta_2) x_1^{2\alpha_1 + 2\alpha_2 - 2}x_2^{2\beta_1 + 2\beta_2 - 2}\\
    & - 2\lambda_1\lambda_2(\lambda_1^2 + \lambda_2^2 - c)(4\alpha_1\beta_1)(\alpha_1 +\alpha_2)(\beta_1 + \beta_2)x_1^{3\alpha_1 + \alpha_2 -2}x_2^{3\beta_1 + \beta_2 - 2} \\
    & - 2\lambda_1\lambda_2(\lambda_1^2 + \lambda_2^2 - c)(4\alpha_2\beta_2)(\alpha_1 +\alpha_2)(\beta_1 + \beta_2)x_1^{\alpha_1 + 3\alpha_2 -2}x_2^{\beta_1 + 3\beta_2 - 2} \\
    & - (\lambda_1^2 + \lambda_2^2 - c)^2 (\alpha_1 +\alpha_2)^2 (\beta_1 + \beta_2)^2 x_1^{2\alpha_1 + 2\alpha_2 -2}x_2^{2\beta_1 + 2\beta_2 - 2}.
\end{aligned}
\]
We now collect the coefficients of equal monomials.

\medskip
\noindent\emph{Coefficient of $x_1^{4\alpha_1 -2}x_2^{4\beta_1 - 2}$:}
\[
\begin{aligned}
  \,&  \lambda_1^2\lambda_2^2 (4\alpha_1\beta_1)(2\alpha_1 - 1)(2\beta_1 - 1) - \lambda_1^2\lambda_2^2 (16\alpha_1^2\beta_1^2)\\
     = &\  \lambda_1^2\lambda_2^2 (4\alpha_1\beta_1) (- 2\alpha_1 -2\beta_1 + 1)\ <\ 0,
\end{aligned}
\]
since $\alpha_1,\beta_1 \ge 2$.

\medskip
\noindent\emph{Coefficient of $x_1^{4\alpha_2 -2}x_2^{4\beta_2 - 2}$:}
\[
\begin{aligned}
  \,&  \lambda_1^2\lambda_2^2 (4\alpha_2\beta_2)(2\alpha_2 - 1)(2\beta_2 - 1) - \lambda_1^2\lambda_2^2 (16\alpha_2^2\beta_2^2)\\
     = &\  \lambda_1^2\lambda_2^2 (4\alpha_2\beta_2) (- 2\alpha_2 -2\beta_2 + 1)\ <\ 0,
\end{aligned}
\]
since $\alpha_2,\beta_2 \ge 2$.

\medskip
\noindent\emph{Coefficient of $x_1^{3\alpha_1 + \alpha_2 -2}x_2^{3\beta_1 + \beta_2  - 2}$:} writing $s^* = \lambda_1\lambda_2(\lambda_1^2 + \lambda_2^2 - c)$,
\[
\begin{aligned}
    &\ s^*(\alpha_1 + \alpha_2)(\alpha_1 + \alpha_2 - 1)(2\beta_1)(2\beta_1 - 1) +
     s^*(2\alpha_1)(2\alpha_1 - 1)(\beta_1 + \beta_2)(\beta_1 + \beta_2 - 1) \\
    &\quad - 2 s^*(2\alpha_1)(2\beta_1)(\alpha_1 + \alpha_2)(\beta_1 + \beta_2)\\
    = &\ s^*(2\alpha_2\beta_1 - 2\alpha_1\beta_2)^2 - s^*(\alpha_1 + \alpha_2)(2\beta_1)(\alpha_1 + \alpha_2 + 2\beta_1 - 1)\\
    &\quad - s^*(2\alpha_1)(\beta_1 + \beta_2)(2\alpha_1 + \beta_1 + \beta_2 - 1)\ <\ 0 .
    \end{aligned}
\]
The coefficient of $x_1^{\alpha_1 + 3\alpha_2 -2}x_2^{\beta_1 + 3\beta_2  - 2}$ is negative by the same computation.

\medskip
\noindent\emph{Coefficient of $x_1^{2\alpha_1 + 2\alpha_2 -2}x_2^{2\beta_1 + 2\beta_2  - 2}$:}
\[
\begin{aligned}
   & \lambda_1^2\lambda_2^2(2\alpha_2)(2\beta_1)(2\alpha_2 - 1)(2\beta_1 - 1) + \lambda_1^2\lambda_2^2(2\alpha_1)(2\alpha_1 - 1)(2\beta_2)(2\beta_2 - 1) \\
   & - 2\lambda_1^2\lambda_2^2 (2\alpha_1)(2\beta_1)(2\alpha_2)(2\beta_2) + (\lambda_1^2 + \lambda_2^2 - c)^2(\alpha_1 + \alpha_2)(\alpha_1 + \alpha_2 - 1)(\beta_1 + \beta_2)(\beta_1 + \beta_2 - 1) \\
   & - (\lambda_1^2 + \lambda_2^2 - c)^2(\alpha_1 + \alpha_2)^2(\beta_1 + \beta_2)^2 .
\end{aligned}
\]
Splitting this into two parts, the first,
\[
\lambda_1^2\lambda_2^2(2\alpha_2)(2\beta_1)(2\alpha_2 - 1)(2\beta_1 - 1) + \lambda_1^2\lambda_2^2(2\alpha_1)(2\alpha_1 - 1)(2\beta_2)(2\beta_2 - 1)
   - 2\lambda_1^2\lambda_2^2 (2\alpha_1)(2\beta_1)(2\alpha_2)(2\beta_2) < 0,
\]
is negative by the computation used for $x_1^{3\alpha_1 + \alpha_2 -2}x_2^{3\beta_1 + \beta_2 - 2}$, and the second,
\[
(\lambda_1^2 + \lambda_2^2 - c)^2\Big[(\alpha_1 + \alpha_2)(\alpha_1 + \alpha_2 - 1)(\beta_1 + \beta_2)(\beta_1 + \beta_2 - 1)
   - (\alpha_1 + \alpha_2)^2(\beta_1 + \beta_2)^2\Big] < 0,
\]
is negative by the computation used for $x_1^{4\alpha_1 -2}x_2^{4\beta_1 - 2}$.

Every collected coefficient is therefore negative, proving
\[
 H_{11}H_{22} - H_{12}^2 < 0
\]
and hence that polynomials of the form \eqref{eq:nonconvex-form} are non-convex, which completes the proof of Theorem~\ref{thm:nonconvex-2var}.
\end{proof}

\end{document}